\newtheorem{theorem}{Theorem}[section]
\newtheorem{corollary}{Corollary}[theorem]
\newtheorem{lemma}[theorem]{Lemma}
\newtheorem{proposition}[theorem]{Proposition}
\newtheorem*{theorem*}{Theorem}
\newtheorem*{corollary*}{Corollary}
\newtheorem*{lemma*}{Lemma}
\newtheorem*{proposition*}{Proposition}
\theoremstyle{definition}
\newtheorem{example}[theorem]{Example}
\newtheorem*{fact*}{Fact}
\newtheorem*{example*}{Example}
\newtheorem*{remark*}{Remark}
\newtheorem*{notation*}{Notation}
\newtheorem{notation}[theorem]{Notation}
\DeclareFontShape{OT1}{cmtt}{bx}{n}{<5><6><7><8><9><10><10.95><12><14.4><17.28><20.74><24.88>cmttb10}{}
\lstdefinelanguage{Macaulay2}{
  keywords={ dummyIGuess, for, in, while, do, from, if, then, else, and, or, new,}
comment=[l]{--},
alsoletter={'},
alsoother={_},
}
\itshape\color{gray},
\lstdefinelanguage{MAGMA}
{
    keywords={for, in, while, do, if, then, else, end, and, or, function, procedure, return, break, where},
    sensitive=true,
    morecomment=[l]{//},
    morecomment=[s]{/*}{*/},
    morestring=[b]",
}
\itshape\color{gray},
\newcounter{magicrownumbers}
\newcommand\rownumber{\stepcounter{magicrownumbers}\arabic{magicrownumbers}}
\newcounter{magicalrownumbers}
\newcommand\rnumber{\stepcounter{magicalrownumbers}\arabic{magicalrownumbers}}
\DeclareMathOperator{\Tr}{Tr}
\DeclareMathOperator{\rk}{rk}
\DeclareMathOperator{\NS}{NS}
\DeclareMathOperator{\Pic}{Pic}
\DeclareMathOperator{\T}{T}
\DeclareMathOperator{\di}{div}
\DeclareMathOperator{\Aut}{Aut}
\DeclareMathOperator{\modulo}{mod}
\DeclareMathOperator{\Homology}{H}
\DeclareMathOperator{\Isometries}{O}
\DeclareMathOperator{\Grass}{Gr}
\newcommand{\hk}{hyper-K{\"a}hler}
\newcommand{\Hk}{Hyper-K{\"a}hler}
\newcommand{\kh}{K{\"a}hler}
\newcommand{\fancyname}{good}
\newcommand{\KTST}{\hk{} fourfold of type K3$^{[2]}$}
\newcommand{\KTSTs}{\hk{} fourfolds of type K3$^{[2]}$}
\newcommand{\NeSe}{N{\'e}ron-Severi}
\newcommand{\dEPW}{double EPW sextic}
\newcommand{\dEPWs}{double EPW sextics}
\newcommand{\DEPWs}{Double EPW sextics}
\newcommand{\defeq}{\vcentcolon=}
\newcommand{\BB}{Beauville-Bogomolov}
\title{Very symmetric hyper-K\"ahler fourfolds}
\begin{document}

\author{Tomasz Wawak}
\affil{\small{Faculty of Mathematics and Computer Science, \\ Jagiellonian University}}
\date{2022}

\maketitle
\begin{abstract}
   G. H\"ohn and G. Mason classified all finite groups acting faithfully and symplectically on a \KTST{}. 
   There are 15 maximal among them, call them $\widetilde{G}_1,\ldots, \widetilde{G}_{15}$. Every manifold of type K3$^{[2]}$ admitting an action of $\widetilde{G}_i$ for some $i$ must necessarily have Picard rank 21 which is maximal. This fact allows us to use lattice-theoretic methods to classify all the finite groups $G$ acting faithfully on a \KTST{} $X$ such that $G$ contains $\widetilde{G}_i$ as a proper subgroup and $\widetilde{G}_i$ acts symplectically on $X$. We also describe examples of fourfolds of K3$^{[2]}$-type admitting an action of such groups.
\end{abstract}
\section*{Introduction}
A compact \kh{} manifold $X$ is called \textit{\hk{}} if it is simply connected and $\Homology^{2,0}(X) \cong \Homology^0(X,  \bigwedge\nolimits^2\Omega_X) = \mathbb{C} \sigma_X$ where $\sigma_X$ is a nowhere degenerate symplectic 2-form. \Hk{} manifolds are always of even dimension; K3 surfaces are \hk{} manifolds of dimension two.
\par
An automorphism of $X$ is called \textit{symplectic} if it preserves the symplectic form, otherwise it is called \textit{nonsymplectic}. 
\par
A \hk{} manifold $X$ is called a hyper-K\"ahler manifold of K3$^{[n]}$-type if it is a deformation equivalent of the Hilbert scheme of $n$ points on a K3 surface. The second cohomology group $\Homology^2(X, \mathbb{Z})$ has a natural lattice structure owing to the \BB{} form\footnote{Sometimes also called \BB -Fujiki form.} (see \cite{Beauville}). If $X$ is a K3 surface, this form induces the standard intersection pairing on the Picard group.
\par
For any finite group action $G$ on a \hk{} manifold X, one can write the following exact sequence
\begin{align} \label{the_exact_sequence}
    1 \xrightarrow{} \widetilde{G} \xrightarrow{} G \xrightarrow{} \mu_m \xrightarrow{} 1,
\end{align}
where $\mu_m$ is the group of $m$-th roots of unity for some natural $m$ and $\widetilde{G}$ is a subgroup of all the symplectic automorphisms in $G$. In the case of K3 surfaces, this simple observation along with some bounds for possible $m$ as seen in \cite{Nikulin} was one of the basic tools allowing the study of large finite groups of automorphisms of K3 surfaces after the classification of the symplectic ones by Mukai in \cite{Mu}. In
particular Kondo in \cite{Ko} shows that the maximum order of a finite group of automorphisms
of a K3 surface is $3840$ and describe such a K3 surface as a Kummer surface. In the works \cite{BS, BH} the authors describe other examples of interesting symmetric K3 surfaces. 

The goal of this paper is to follow this schema using the classification (obtained in \cite{HM}) of finite groups of symplectic automorphisms acting on \KTSTs{}. Among them, 15 are maximal.

In the sequel, we classify the polarized \KTSTs{} admitting an action of finite groups $G$ such that $\widetilde{G}$ is isomorphic to one of the 15 maximal groups; we call such manifolds \emph{very symmetric}. The classification is up to the transcendental lattice, the polarization type of the vector invariant under the action and the invariant lattice for the symplectic part of the group action; the results are presented in Table \ref{table:1}.

Section \ref{preliminary} presents some needed preliminary results of the lattice theory and the theory of \hk{} manifolds. Section \ref{construction} presents the method in which we find the transcendental lattices of very symmetric hyper-K\"ahler fourfolds and a list of these. 
Finally, Section \ref{examples} lists known explicit examples of very symmetric K3$^{[2]}$ type manifolds. Those examples are constructed as Fano varieties of lines on special cubic fourfolds as special Debarre-Voisin fourfolds or special EPW sextics.
 In particular, in Section \ref{6d2_3^4:A_6} we describe the most symmetric K3$^{[2]}$ type manifold with a group of authomorphisms of order $174960$
 as a Fano variety of lines on the Fermat cubic.
Moreover, Section \ref{squares} proves the existence of birational models of Hilbert squares of two K3 surfaces that are very symmetric.

To attain our results, we use \cite{MAGMA} and \cite{M2} systems for computations, the explanation for computations can be found in Appendix \ref{codes}. We attach the full codes as well as representations of the groups found in \cite{GAP4} format in the auxiliary files. Appendix \ref{HM_table} summarizes information on the 15 maximal groups form \cite{HM} that we use.

\subsection*{Acknowldgement}
The author would like to extend his thanks to G. Kapustka for suggesting the topic and guidance during the writing, G. H\"ohn for sharing his computational files as well as S. Mueller, G. Mongardi, M. Kapustka and Jieao Song for helpful comments. During the writing of this paper, the author has been supported by the project Narodowe Centrum Nauki 2018/30/E/ST1/00530 and received an incentive scholarship from the funds of the program Excellence Initiative - Research University at the Jagiellonian University in Kraków. Lastly, let us mention that the topic has been independently considered by Paola Comparin, Romain Demelle and Pablo Quezada Mora in a yet unpublished article; among others, they obtain a more detailed description of the \KTSTs{} admitting an action of $3^4 \colon A_6$.

\section{Preliminaries}\label{preliminary}

\subsection{Lattice theory}
We define a lattice as a free $\mathbb{Z}$-module $L$ equipped with a symmetric bilinear form
\begin{align*}
     b \colon L \times L \rightarrow \mathbb{Z}.
\end{align*}
We will often write $v \cdot w$ or $v w$ instead of $b(v, w)$ and $v^2$ instead of $b(v, v)$ for $v, w \in L$. The rank $\rk L$ is the cardinality of a basis of $L$. We define the \textit{dual} of $L$ as $L^{\vee} = \{v \in L \otimes \mathbb{Q} \colon v \cdot w \in \mathbb{Z}, \text{ for every $w \in L$} \}$ and the \textit{discriminant group} of $L$ as $D_{L} = L^{\vee}/L$. A lattice is called \emph{unimodular} if its discriminant group is trivial. A vector $v$ in $L$ is said to be of \textit{divisibility} $k$ if $v \cdot L = k \mathbb{Z}$ (then $\frac{v}{k}$ lies in $L^{\vee}$); we write $\di(v) = k$. The lattice is called \textit{even} if for any vector $v$, $v^2$ is an even number. For an even lattice, the discriminant group is equipped with the quadratic form\footnote{It is well defined because if $u$ lies in $L$, and $v$ lies in $L^{\vee}$, then $(v+u)^2 - v^2 = 2uv + u^2$ is an even integer.}
\begin{align*}
     q_{D_{L}} \colon D_{L} \ni [v] \mapsto  (v^2 \modulo 2) \in \mathbb{Q}/2\mathbb{Z},
\end{align*}
we call it the \emph{discriminant form}.

For two lattices $L_1$ and $L_2$ with bilinear forms $b_1$ and $b_2$ respectively, we call a module isomorphism $f \colon L_1 \rightarrow L_2$ an \emph{isometry} if it respects the bilinear forms i.e.\ $b_2(f(v), f(w)) = b_1(v, w)$ for any $v, w \in L_1$. We call $f$ an \emph{anti-isometry} if it $b_2(f(v), f(w)) = -b_1(v, w)$ for any $v, w \in L_1$. For a module monomorphism $g \colon L_1 \rightarrow L_2$, we call $g$ an embedding if its an isometry onto the image. An analogously, we call it an anti-embedding if it is an anti-isometry onto the image. The group $\Isometries(L)$ consists of isometries $L \rightarrow L$. 

For abelian groups equipped with quadratic forms like the ones defined for the discriminant groups we also can define isometries, anti-isometries, embeddings and anti-embeddings relative to these forms. So we define $\Isometries(D_{L})$ as the group of isometries of the discriminant group $D_L$. The isometries of $L$ naturally induce isometries of $D_{L}$. For $f \in \Isometries(L)$, we denote the isometry it induces on $D_{L}$ by $\bar{f}$. We define the subgroup $\bar{O}(L) \subset \Isometries(D_{L})$ as consisting of all the isometries of discriminant group induced by the isometries of the original lattice. 

A sublattice $M$ of $L$ is said to be \textit{primitive} if $L/M$ is torsion-free. The \emph{orthogonal complement} of $M$ is $M^{\perp} = \{ v \in L \colon v \cdot M = 0 \}$. One can show that for any sublattice of a given lattice, its orthogonal complement is primitive. 

We will often denote a lattice by a Gram matrix of its bilinear form, i.e.\ if $e_1, e_2, \ldots, e_n$ is a basis of the lattice, it will be the matrix $(e_i \cdot e_j)_{i,j = 1}^n$. Note that to avoid confusion throughout the paper we will opt to only write matrices with parentheses "( )" to denote the form and its lattice; the square brackets "[ ]" will encompass matrices representing linear transformations. By $\langle k \rangle$ (where $k$ is an integer) we will denote the rank 1 lattice with generator $v$ such that $v^2 = k$. For a lattice $A$ and an integer $m$, $A(m)$ is the lattice with the same underlying $\mathbb{Z}$-module and a bilinear form $v \cdot u = m (v \cdot_A u)$ for $u,v$ vectors in $A$ ($\cdot_A$ is the bilinear form on $A$).

A lattice is called positive (negative) definite, indefinite, hyperbolic, nondegenerate etc\ if its bilinear form has the respective property. Henceforth, all lattices are assumed to be nondegenerate.

For a lattice $L$ and the group $G$ acting on $L$ via isometries, we define the \emph{invariant sublattice} (sometimes also called the \emph{invariant lattice}).
\begin{align*}
    L^G = \{v \in L \colon g(v) = v, \text{ for any $g \in G$} \}, 
\end{align*}
and the \emph{coinvariant sublattice} (sometimes also called the \emph{coinvariant lattice}).  
\begin{align*}
    L_G = (L^G)^\perp.
\end{align*}
By simple linear algebra, both are primitive.

In the theory of K3 surfaces, particularly in the study of their automorphisms, the following lattice-theoretic result turns out to be very useful (see e.g.\ \cite[Chapter 14, Proposition 0.2]{huybrechts_2016}):
\begin{proposition}\label{isomorphism}
Let $L$ be a even unimodular lattice, $M \subset L$ a primitive sublattice, and put $N = M^{\perp}$. Then
\begin{align*}
    L / (M \oplus N) \subset M^{\vee}/M \oplus N^{\vee} / N = D_{M} \oplus D_{N}  
\end{align*}
is the graph of a an anti-isometry $\gamma \colon D_{M} \rightarrow D_{N}$. In particular, $D_M$ and $D_N$ are isomorphic as groups.
\end{proposition}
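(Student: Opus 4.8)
The plan is to exploit unimodularity of $L$ to show the natural map $L/(M\oplus N) \to D_M \oplus D_N$ is injective with a graph-like image, and then read off the anti-isometry.

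\textbf{Step 1: The natural inclusion.} Since $M$ and $N=M^\perp$ are both primitive in $L$, we have $M\oplus N \subseteq L$, hence $L = L^\vee \subseteq (M\oplus N)^\vee = M^\vee \oplus N^\vee$ (using unimodularity for the first equality and the general fact $(A\oplus B)^\vee = A^\vee \oplus B^\vee$). Therefore $L/(M\oplus N)$ sits inside $(M^\vee \oplus N^\vee)/(M\oplus N) = D_M \oplus D_N$. Write $p_M, p_N$ for the two projections $D_M\oplus D_N \to D_M$, $D_N$, and let $H = L/(M\oplus N)$ be the image.

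\textbf{Step 2: $H$ projects injectively and surjectively onto each factor.} First I would check $p_M|_H$ is injective: if $x\in L$ maps to $(0, \bar{y})\in D_M\oplus D_N$, then $x \in M \oplus N^\vee$; writing $x = m + n'$ with $m\in M$, $n'\in N^\vee$, the component $m$ lies in $L$, so $n' = x - m \in L$, and then $n'\in L\cap (N\otimes\mathbb{Q})$; since $N$ is primitive, $L\cap(N\otimes\mathbb{Q}) = N$, so $n'\in N$ and $x\in M\oplus N$, i.e. $x$ is zero in $H$. The same argument with the roles of $M,N$ swapped shows $p_N|_H$ is injective. For surjectivity onto $D_M$: given $v\in M^\vee$, I claim there is $n'\in N^\vee$ with $v + n'\in L$. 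This is where unimodularity is used again — define the functional $L \to \mathbb{Z}$ by $w\mapsto b(w, v)$ (well-defined and integral because $v\in M^\vee$ means $b(v,M)\subseteq \mathbb{Z}$, but we need it on all of $L$...). More cleanly: consider $L/N$; since $N$ is primitive, $L/N$ is a lattice (torsion-free) containing $M$ with finite index, and a chain of index computations using $|D_L|=1$, $[L:M\oplus N]^2 = |D_M||D_N|/|D_L| = |D_M||D_N|$ forces $[L:M\oplus N]^2 = |D_M||D_N|$, so $|H|^2 = |D_M||D_N|$; combined with the injections $|H|\le \min(|D_M|,|D_N|)$, this forces $|H| = |D_M| = |D_N|$ and both projections to be bijections.

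\textbf{Step 3: Identify the graph and the anti-isometry.} Since $p_M|_H$ and $p_N|_H$ are both isomorphisms, $\gamma \defeq (p_N|_H)\circ (p_M|_H)^{-1} \colon D_M \to D_N$ is a group isomorphism whose graph is exactly $H$. It remains to check $\gamma$ is an anti-isometry for the discriminant quadratic/bilinear forms. For this, take $x = m' + n' \in L$ with $m'\in M^\vee$, $n'\in N^\vee$ representing a general element of $H$, so $\gamma(\bar{m'}) = \bar{n'}$. Then $0 = b(x, x') \bmod$ integrality constraints — more precisely, since $x, x'\in L$, $b(x,x')\in\mathbb{Z}$, and expanding $b(m'+n', m''+n'') = b(m',m'') + b(n',n'')$ (cross terms vanish as $M\perp N$), we get $b(m',m'') \equiv -b(n',n'') \pmod{\mathbb{Z}}$, which is precisely $q_{D_N}(\gamma(\bar{m'}),\gamma(\bar{m''})) = -q_{D_M}(\bar{m'},\bar{m''})$ at the level of the bilinear discriminant forms; taking $x=x'$ gives the statement for the quadratic form $q_{D_M}$. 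Hence $\gamma$ is an anti-isometry, and in particular $D_M\cong D_N$ as groups.

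\textbf{Main obstacle.} The only genuinely delicate point is the surjectivity of the projections in Step 2 — equivalently, the index computation $[L:M\oplus N]^2 = |D_M|\cdot|D_N|$, which is where unimodularity of $L$ is essential and which one should prove carefully (e.g. via the snake/index lemma applied to $M\oplus N \hookrightarrow L \hookrightarrow M^\vee\oplus N^\vee$ together with $L=L^\vee$). Everything else is bookkeeping with dual lattices and the definitions of the discriminant forms; I would be slightly careful that all cross terms $b(M^\vee, N^\vee)$ vanish, which holds because $N^\vee \subseteq N\otimes\mathbb{Q}$ and $M\perp N$ extends to $M\otimes\mathbb{Q}\perp N\otimes\mathbb{Q}$.
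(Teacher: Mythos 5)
Your argument is correct and is essentially the standard proof the paper itself does not reproduce but cites (Huybrechts, Ch.~14, Prop.~0.2): injectivity of both projections from primitivity of $M$ and $N$, surjectivity from the index computation $[L:M\oplus N]^2=|D_M||D_N|/|D_L|$ with $|D_L|=1$, then reading off the anti-isometry from integrality of $b$ on $L$. One small point to tighten: at the very end, ``taking $x=x'$'' in the congruence $b(m',m'')\equiv -b(n',n'')\pmod{\mathbb{Z}}$ only gives agreement of the quadratic forms modulo $\mathbb{Z}$, whereas $q_{D_M}$ and $q_{D_N}$ take values in $\mathbb{Q}/2\mathbb{Z}$; you must invoke the evenness of $L$ (so that $x^2=(m')^2+(n')^2\in 2\mathbb{Z}$) to conclude $q_{D_N}(\gamma(\bar{m}'))=-q_{D_M}(\bar{m}')$ in $\mathbb{Q}/2\mathbb{Z}$.
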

One can easily show (e.g.\ by the proof of \cite[Chapter 14, Proposition 0.2]{huybrechts_2016}) that in general the following holds:
\begin{proposition}\label{partial_monomorphism}
Let $L$ be an even lattice, $M \subset L$ a primitive sublattice, and put $N = M^{\perp}$. Then
\begin{align*}
    L / (M \oplus N) \subset M^{\vee}/M \oplus N^{\vee} / N = D_{M} \oplus D_{N}  
\end{align*}
is the graph of an anti-isometry $\gamma \colon A \rightarrow B$ for some subgroups $A \subset D_{M}$ and $B \subset D_{N}$. 
\end{proposition}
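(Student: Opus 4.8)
The plan is to mimic the standard overlattice argument behind Proposition \ref{isomorphism}, but without the unimodularity hypothesis, keeping track only of what the construction actually delivers. First I would consider the two projection maps $p_M \colon L \to M^\vee$ and $p_N \colon L \to N^\vee$. Here $p_M$ is the composite $L \hookrightarrow L^\vee \to M^\vee$, where $L \hookrightarrow L^\vee$ is the natural map $v \mapsto b(v,-)$ (injective since $L$ is nondegenerate) and $L^\vee \to M^\vee$ is restriction of functionals to $M$; concretely, $p_M(v)$ is the unique element of $M^\vee$ with $p_M(v)\cdot m = v \cdot m$ for all $m \in M$. Note $p_M$ restricted to $M$ is the identity and $p_M(N) = 0$, and symmetrically for $p_N$. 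This gives a homomorphism $(p_M, p_N) \colon L \to M^\vee \oplus N^\vee$ whose kernel is $\{v : v \cdot M = 0 \text{ and } v \cdot N = 0\} = M^\perp \cap N^\perp = 0$ by nondegeneracy (since $M + N$ has finite index, being a full-rank sublattice — here is where I would want $M$ to be such that $M \oplus N$ has finite index in $L$, which holds because $M$ is primitive of some rank $r$ and $N = M^\perp$ has rank $\rk L - r$). Composing with the quotient $M^\vee \oplus N^\vee \to D_M \oplus D_N$, I would define $\Gamma$ to be the image of $L$; since $M \oplus N$ maps to $0$, this factors through $L/(M\oplus N)$, and I would check injectivity of the induced map $L/(M\oplus N) \to D_M \oplus D_N$ (an element mapping to $0$ lies in $M^\vee \oplus N^\vee$ coming from $L$, but $p_M(v) \in M$ and $p_N(v) \in N$ force $v \in M \oplus N$ after using that the projections determine $v$).

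Next I would identify $\Gamma \subset D_M \oplus D_N$ as a graph. Let $A \subset D_M$ be the image of $\Gamma$ under the first projection and $B \subset D_N$ the image under the second. The key point is that $\Gamma$ is the graph of a well-defined map $\gamma \colon A \to B$: equivalently, if $(x, y)$ and $(x, y')$ both lie in $\Gamma$ then $y = y'$. This follows because $(0, y - y') \in \Gamma$ means there is $v \in L$ with $p_M(v) \in M$ (i.e.\ $v \cdot M \subset \mathbb{Z}$ already with trivial class) and $p_N(v)$ representing $y - y'$; but $v \cdot M \in \mathbb{Z}$ means the functional $v$ restricts to an \emph{integral} functional on $M$, and $v \cdot N$ always lies in $\mathbb{Z}$, so $v$ is an integral functional on the finite-index sublattice $M \oplus N$, hence $v \in (M\oplus N)^\vee$; I then need $v \in L$, which it is by construction, and the upshot (exactly as in the unimodular case) is that $v \in M^\perp{}^\perp$-type reasoning shows $v$'s $N$-component is again in $N$, so $y - y' = 0$. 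The symmetric argument gives that $\gamma$ is a bijection $A \to B$ with inverse given by the analogous map. So $\Gamma$ is the graph of an isomorphism of groups $\gamma \colon A \xrightarrow{\sim} B$.

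Finally I would verify that $\gamma$ is an anti-isometry for the discriminant quadratic forms $q_{D_M}$ and $q_{D_N}$. Take $v \in L$ and write $a = [p_M(v)] \in A$, $b = [p_N(v)] \in B$, so $\gamma(a) = b$. Then $p_M(v) - p_N(v) \dots$ — more precisely, $v = p_M(v) + p_N(v)$ as an identity in $L \otimes \mathbb{Q}$ (both sides have the same pairing against every element of $M$ and of $N$, hence against $M \oplus N$, which spans over $\mathbb{Q}$). Computing $v^2 = p_M(v)^2 + 2\,p_M(v)\cdot p_N(v) + p_N(v)^2 = p_M(v)^2 + p_N(v)^2$, since $p_M(v) \in M \otimes \mathbb{Q}$ and $p_N(v) \in N \otimes \mathbb{Q}$ are orthogonal. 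Because $L$ is even, $v^2 \in 2\mathbb{Z}$, hence $p_N(v)^2 \equiv -\,p_M(v)^2 \pmod{2\mathbb{Z}}$, i.e.\ $q_{D_N}(b) = -\,q_{D_M}(a)$ in $\mathbb{Q}/2\mathbb{Z}$. Polarizing (using the associated bilinear form on the discriminant groups and the same decomposition for $v + w$) gives $b_{D_N}(\gamma a, \gamma a') = -\,b_{D_M}(a, a')$, so $\gamma$ is an anti-isometry of quadratic forms as claimed.

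The main obstacle, compared to the textbook unimodular case, is precisely the bookkeeping that shows $\Gamma$ projects injectively enough to be a graph in \emph{both} directions — i.e.\ that the two partial subgroups $A$ and $B$ and the map between them are well-defined — since without unimodularity we no longer have $L = L^\vee$ and cannot simply say $L/(M\oplus N) \hookrightarrow D_M \oplus D_N$ has both projections injective onto all of $D_M$, $D_N$; everything else is the same orthogonal-decomposition and evenness computation. The cleanest route is probably to cite the proof of \cite[Chapter 14, Proposition 0.2]{huybrechts_2016} verbatim and simply observe that the step using $D_L = 0$ is only used to conclude $A = D_M$ and $B = D_N$, so dropping it yields exactly the stated weaker conclusion.
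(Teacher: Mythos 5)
Your argument is correct and is essentially the paper's own approach: the paper gives no independent proof, but simply observes that the proof of \cite[Chapter 14, Proposition 0.2]{huybrechts_2016} goes through once one drops the step that uses unimodularity to identify $A$ and $B$ with all of $D_M$ and $D_N$ --- which is exactly your closing observation. The only point worth tidying is that the $M^{\perp\perp}=M$ (primitivity) step belongs to the direction $(x,0)\in\Gamma\Rightarrow x=0$, whereas the direction you spell out, $(0,y)\in\Gamma\Rightarrow y=0$, needs only $M^{\perp}=N$, so your appeal to ``$M^{\perp\perp}$-type reasoning'' there is attached to the wrong half of the symmetry.
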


The morphism $\gamma$ above is sometimes called a gluing morphism. A more specific result in this spirit will turn out to be of use to us.
\begin{lemma} \label{extension_condition}
Let $M$ be an even lattice, $N$ its primitive sublattice, such that there exists an anti-embedding
\begin{align*}
        \gamma \colon D_{N^{\perp}} \rightarrow D_{N},
\end{align*}
and $M/(N^{\perp} \oplus N)  \subset D_{N^{\perp}} \oplus D_{N}$ is its graph. Then for an isometry $f \in \Isometries(N)$, $f$ extends to an isometry of $M$ if and only if 
\begin{enumerate}
\item $\bar{f}$ leaves $\gamma(D_{N^{\perp}})$ invariant,
\item $\gamma^{-1} \circ \bar{f} \circ \gamma$ (it is well defined by the above) is induced by an isometry of $N^{\perp}$.
\end{enumerate}
\end{lemma}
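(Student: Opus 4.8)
The plan is to reduce the statement to the classical overlattice/gluing dictionary — the mechanism underlying Propositions~\ref{isomorphism} and~\ref{partial_monomorphism} — and then transport the resulting criterion through the anti-embedding $\gamma$. The first observation is that any $\tilde{f}\in\Isometries(M)$ with $\tilde{f}|_N=f$ is forced to preserve $N^{\perp}$: since $f(N)=N$ and $\tilde f$ is an isometry, $\tilde{f}(N^{\perp})=\tilde{f}(N)^{\perp}=N^{\perp}$, so $g\defeq\tilde{f}|_{N^{\perp}}\in\Isometries(N^{\perp})$ and $\tilde{f}$ is nothing but the restriction to $M$ of $(g\oplus f)\otimes\mathbb{Q}$ acting on $(N^{\perp}\oplus N)\otimes\mathbb{Q}=M\otimes\mathbb{Q}$. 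Conversely, for any pair $(g,f)\in\Isometries(N^{\perp})\times\Isometries(N)$ the map $(g\oplus f)\otimes\mathbb{Q}$ preserves $(N^{\perp}\oplus N)^{\vee}=(N^{\perp})^{\vee}\oplus N^{\vee}$, hence induces $\bar{g}\oplus\bar{f}$ on $D_{N^{\perp}}\oplus D_N$; as $N^{\perp}\oplus N\subset M\subset(N^{\perp}\oplus N)^{\vee}$, this $\mathbb{Q}$-map restricts to an element of $\Isometries(M)$ extending $f$ exactly when it preserves $M$, which (reading things off on the quotient) happens if and only if $\bar{g}\oplus\bar{f}$ preserves the subgroup $H\defeq M/(N^{\perp}\oplus N)\subset D_{N^{\perp}}\oplus D_N$. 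So the problem becomes: for which $f$ does there exist $g\in\Isometries(N^{\perp})$ with $(\bar{g}\oplus\bar{f})(H)=H$?

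Next I would use that $H$ is, by hypothesis, the graph $\{(x,\gamma(x)):x\in D_{N^{\perp}}\}$. Since $(\bar{g}\oplus\bar{f})(x,\gamma(x))=(\bar{g}(x),\bar{f}(\gamma(x)))$ and $\bar g$ is a bijection of $D_{N^{\perp}}$, the equality $(\bar{g}\oplus\bar{f})(H)=H$ is equivalent to $\bar{f}\circ\gamma=\gamma\circ\bar{g}$ on $D_{N^{\perp}}$. From here the two implications fall out. If such a $g$ exists, then $\bar{f}(\gamma(D_{N^{\perp}}))=\gamma(\bar{g}(D_{N^{\perp}}))=\gamma(D_{N^{\perp}})$, giving (1), and $\gamma^{-1}\circ\bar{f}\circ\gamma=\bar{g}$ is induced by the isometry $g$ of $N^{\perp}$, giving (2). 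Conversely, (1) makes $\gamma^{-1}\circ\bar{f}\circ\gamma$ a well-defined automorphism of $D_{N^{\perp}}$ (this is where the injectivity of the anti-embedding $\gamma$ enters), and (2) supplies $g\in\Isometries(N^{\perp})$ with $\bar{g}=\gamma^{-1}\circ\bar{f}\circ\gamma$; that $g$ satisfies $\bar{f}\circ\gamma=\gamma\circ\bar{g}$, so by the first paragraph $f$ extends to $M$.

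I do not expect a genuine obstacle here: the content is entirely the translation of ``extends to $\Isometries(M)$'' into a statement about the discriminant groups. The one place that needs care is the first paragraph — checking that an extension must respect the orthogonal splitting $N\oplus N^{\perp}$, and that ``$(g\oplus f)\otimes\mathbb{Q}$ preserves the intermediate lattice $M$'' is faithfully recorded by ``$\bar g\oplus\bar f$ preserves $H$''. After that the argument is a short diagram chase, whose only subtlety is that $\gamma^{-1}$ may be applied to $\bar{f}(\gamma(x))$ only once (1) is known, which is precisely why the two conditions are listed in that order.
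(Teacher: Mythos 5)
Your proposal is correct and follows essentially the same route as the paper's proof: restrict a putative extension to $N^{\perp}$, observe that preserving $M$ amounts to $\bar{g}\oplus\bar{f}$ preserving the graph of $\gamma$, and read off the two conditions from $\bar{f}\circ\gamma=\gamma\circ\bar{g}$. You merely make explicit two steps the paper leaves implicit (that any extension must preserve $N^{\perp}$, and why stability of $M$ is equivalent to stability of $M/(N^{\perp}\oplus N)$), which is a welcome tightening rather than a different argument.
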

\begin{proof}
Indeed, assume there exists $\widetilde{f} \in \Isometries(M)$, such that $\widetilde{f}_{|N} = f$. Take $g = \widetilde{f}_{|N^{\perp}}$. Then $\bar{f} \oplus \bar{g}$ is an automorphism of $D_{N \oplus N^{\perp}} = D_{N} \oplus D_{N^{\perp}}$. Any element of $M/(N^{\perp} \oplus N)$ can be written as $\gamma(x) + x$ where $x$ is an element of  $D_{N^{\perp}}$. We must have  $\bar{f} \oplus \bar{g}(\gamma(x) + x) = \bar{f}(\gamma(x)) + \bar{g}(x)$ still in $M/(N^{\perp} \oplus N)$. So $\gamma \circ \bar{g} = \bar{f} \circ \gamma$, and $\bar{f}$ indeed leaves the image of $\gamma$ invariant, which means we can invert $\gamma$ in the last equality which concludes the proof in one direction. In the other one, we simply define $g$ to be an isometry which induces $\gamma^{-1} \circ \bar{f} \circ \gamma$ and $f \oplus g$ extends to an isometry of $M$.
\end{proof}

\subsection{\Hk{} manifolds and their lattices}
Let us briefly recall some theory on lattices of \hk{} manifolds (see \cite{D, Huybrechts2} for more details). For a \hk{} manifold $X$, the \BB{} form endows the second cohomology group $\Homology^2(X, \mathbb{Z})$ with a lattice structure of signature  (3, $b_2(X) - 3$) where $b_2(X)$ is the second Betti number of the manifold $X$. The N{\'e}ron-Severi group (or lattice) of $X$ is defined as
\begin{align*}
    \NS_X = \Homology^2(X, \mathbb{Z}) \cap \Homology^{1,1}(X) \subset \Homology^2(X, \mathbb{C}).
\end{align*}

The transcendental lattice $\T_X$ is the orthogonal complement of $\NS_X$ in $\Homology^2(X, \mathbb{Z})$. In the projective case, the N{\'e}ron-Severi group is isomorphic to the Picard group $\Pic(X)$. Then the induced bilinear form is hyperbolic (that is, of signature (1, $k$) for some $k \in \mathbb{N}$), also, $h^2 > 0$ for any ample class $h \in \Pic(X) = \NS_X$. For a divisor $x$ in the \NeSe{} group, we call the number $x^2$ a Beauville-Bogomolov degree of $x$.

By $\Aut(X)$ we will denote the group of biholomorphic automorphisms (we will just call them automorphisms from now on) of $X$. $\Aut_s(X)$ will be the subgroup of $\Aut(X)$ consisting of the symplectic automorphisms. 

We note the following consequence of the existence of the exact sequence (\ref{the_exact_sequence}).
\begin{lemma}
A finite simple subgroup $G \subset \Aut(X)$ for a \hk{} manifold $X$ is either cyclic or is contained in $\Aut_s(X)$.
\end{lemma}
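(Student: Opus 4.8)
The plan is to exploit the exact sequence (\ref{the_exact_sequence}) together with the fact that $\mu_m$ is abelian. Given a finite simple subgroup $G \subset \Aut(X)$, restrict the sequence (or rather, apply the same construction) to $G$: we obtain a short exact sequence $1 \to \widetilde{G} \to G \to \mu_m \to 1$, where $\widetilde{G} = G \cap \Aut_s(X)$ is the subgroup of symplectic automorphisms in $G$ and $\mu_m$ is cyclic. The key point is that $\widetilde{G}$ is a normal subgroup of $G$, since it is the kernel of the homomorphism $G \to \mu_m$ sending $g$ to the scalar $\varphi(g)$ with $g^*\sigma_X = \varphi(g)\sigma_X$.

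Now I would invoke simplicity of $G$. A simple group has exactly two normal subgroups: the trivial one and $G$ itself. Hence either $\widetilde{G} = G$, in which case $G \subset \Aut_s(X)$ as claimed, or $\widetilde{G} = \{1\}$, in which case the map $G \to \mu_m$ is injective, so $G$ embeds into the cyclic group $\mu_m$ and is therefore itself cyclic. (One should note that the degenerate case $G$ trivial is covered too, being both cyclic and symplectic.) That is the whole argument; there is essentially nothing else to do.

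The only step requiring a little care — and the closest thing to an obstacle — is justifying that $\widetilde{G}$ is genuinely the kernel of a homomorphism into a cyclic group, i.e.\ that the commented-out construction in the introduction applies to the subgroup $G$ and not merely to the ambient situation. This is immediate: the assignment $g \mapsto \varphi(g)$ with $g^*\sigma_X = \varphi(g)\sigma_X$ is multiplicative because $(gh)^*\sigma_X = h^*g^*\sigma_X = \varphi(g)h^*\sigma_X = \varphi(g)\varphi(h)\sigma_X$, its image is a finite subgroup of $\mathbb{C}^*$ hence equal to some $\mu_m$, and its kernel is by definition $\widetilde{G} = G \cap \Aut_s(X)$. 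With that in hand the lemma follows purely group-theoretically from the classification of normal subgroups of a simple group, so I expect the proof to be only two or three lines.
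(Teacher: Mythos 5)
Your argument is exactly the paper's: restrict the exact sequence $1 \to \widetilde{G} \to G \to \mu_m \to 1$ to the subgroup $G$, note $\widetilde{G}$ is normal, and conclude by simplicity that either $\widetilde{G}=G$ (so $G\subset\Aut_s(X)$) or $\widetilde{G}$ is trivial (so $G$ embeds in the cyclic group $\mu_m$). The extra verification that $g\mapsto\varphi(g)$ is a well-defined homomorphism with kernel $G\cap\Aut_s(X)$ is a correct and harmless elaboration of what the paper takes for granted.
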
\label{simple_lemma}
\begin{proof}
    By the exact sequence (\ref{the_exact_sequence}), $G$ has a normal subgroup $\widetilde{G}$ such that $G/\widetilde{G}$ is a finite cyclic group and action of $\widetilde{G}$ on $X$ is symplectic. But by simplicity of $G$, $\widetilde{G} = G$ or $\widetilde{G}$ is trivial. 
\end{proof}

Let us observe that analyzing finite group actions on a projective \hk{} manifold $X$ is the same as analyzing groups of polarized morphisms (i.e.\ morphisms preserving the polarization) of $(X, h)$ for some ample $h \in \Pic(X)$ as a consequence of the following result (cf. \cite[Proposition 4.1]{D}).
\begin{proposition}\label{finite_kernel}
For a projective \hk{} manifold, the map
\begin{align*}
    \Psi \colon \Aut(X) \ni f \mapsto f^{*} \in \Isometries(\Pic(X))
\end{align*}
has a finite kernel.
\end{proposition}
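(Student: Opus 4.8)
The plan is to show that $\ker\Psi$ is contained in the automorphism group of a polarized projective embedding of $X$, that this automorphism group is a linear algebraic group of finite type, and that it has trivial Lie algebra because $X$ carries no nonzero global holomorphic vector field; finiteness of $\ker\Psi$ then follows.

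First I would set $G=\ker\Psi$ and fix a very ample line bundle $L$ on $X$ (possible since $X$ is projective). Every $g\in G$ acts as the identity on $\Pic(X)$, so $g^{*}[L]=[L]$. Since $X$ is \hk{} it is simply connected, hence $\Homology^1(X,\mathcal{O}_X)=0$ and $\Pic^0(X)=0$; therefore $g^{*}[L]=[L]$ already forces $g^{*}L\cong L$. Consequently each $g\in G$ is induced by an element of $\mathrm{PGL}_{N+1}$ preserving the image of the embedding $X\hookrightarrow\mathbb{P}^N=\mathbb{P}\big(\Homology^0(X,L)^{\vee}\big)$, and $G$ is contained in the closed subgroup $\Aut(X,L)=\{\phi\in\mathrm{PGL}_{N+1}:\phi(X)=X\}$ of $\mathrm{PGL}_{N+1}$, which is a linear algebraic group of finite type over $\mathbb{C}$.

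Next I would compute the Lie algebra of $\Aut(X,L)$: it embeds into the space $\Homology^0(X,T_X)$ of global holomorphic vector fields. Contraction with the symplectic form $\sigma_X$ gives an isomorphism $T_X\xrightarrow{\sim}\Omega^1_X$, so $\Homology^0(X,T_X)\cong\Homology^0(X,\Omega^1_X)=\Homology^{1,0}(X)$, which vanishes because $b_1(X)=0$. Hence $\Aut(X,L)$ is a finite-type group scheme over $\mathbb{C}$, smooth since we are in characteristic zero, with zero-dimensional Lie algebra, and therefore finite. As $G\subseteq\Aut(X,L)$, the kernel of $\Psi$ is finite.

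The only genuinely non-formal inputs are the algebraicity of $\Aut(X,L)$ as a finite-type linear algebraic group — which I would either justify via the linearization described above or simply cite through the representability of the automorphism functor of a projective scheme — and the identification of its tangent space at the identity with $\Homology^0(X,T_X)$; the rest is bookkeeping, and the vanishing $\Homology^0(X,T_X)=0$ is an immediate consequence of the holomorphic symplectic form. I expect the main obstacle to be making the passage ``acts trivially on $\Pic$ $\Rightarrow$ lies in a finite-type linear algebraic group'' fully precise, i.e.\ pinning down the scheme structure on $\Aut(X,L)$ and its tangent space at the identity; once that is in place, finiteness drops out.
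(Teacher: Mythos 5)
Your argument is correct. Note, however, that the paper does not prove this proposition at all: it is stated with a bare citation to Debarre's survey (\cite[Proposition 4.1]{D}), so there is no in-paper proof to compare against. Your proof is essentially the standard argument underlying that reference: elements of $\ker\Psi$ fix an ample (very ample) class, hence lie in the automorphism group of a polarized embedding, which is a linear algebraic group of finite type whose Lie algebra sits inside $\Homology^0(X,T_X)\cong\Homology^0(X,\Omega^1_X)=0$ (contraction with $\sigma_X$ plus $b_1(X)=0$), forcing finiteness. The only points worth tightening are the ones you already flag — injectivity of the restriction of the Lie algebra of the stabilizer in $\mathrm{PGL}_{N+1}$ into $\Homology^0(X,T_X)$ (which holds since $X$ is nondegenerate in $\mathbb{P}^N$), or alternatively a direct appeal to representability of the automorphism functor of a polarized projective scheme — and neither is a gap in substance.
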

\begin{corollary}\label{finite_group}
If $X$ is a projective \hk{} manifold, $G \subset \Aut(X)$ is a subgroup, then $G$ is finite if and only if it fixes an ample class on $X$.
\end{corollary}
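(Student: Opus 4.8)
The plan is to prove both implications, using Proposition \ref{finite_kernel} for the non-trivial direction. First I would handle the easy direction: if $G$ fixes an ample class $h$, then $G$ acts on $X$ preserving the polarization $h$, so $G$ embeds (as a group of polarized automorphisms) into the automorphism group of the pair $(X,h)$; by a standard Matsusaka--Mumford type argument, or by appealing to the fact that automorphisms fixing an ample class preserve the Hilbert polynomial and hence lie in a projective algebraic group whose stabilizer of a point is finite, this automorphism group is finite, so $G$ is finite. (Alternatively, one can phrase this via the fact that $\Aut(X,h)$ is both discrete and a closed subgroup of a quasi-projective variety.)

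For the converse — the direction actually used in the sequel — suppose $G \subset \Aut(X)$ is finite. Consider the representation $\Psi \colon \Aut(X) \to \Isometries(\Pic(X))$ from Proposition \ref{finite_kernel} and restrict it to $G$; then $\Psi(G)$ is a finite subgroup of $\Isometries(\Pic(X))$. Pick any ample class $h_0 \in \Pic(X)$ (which exists since $X$ is projective) and set
\begin{align*}
    h = \sum_{\phi \in \Psi(G)} \phi(h_0).
\end{align*}
Since each $\phi = f^*$ for some automorphism $f$ pulls back ample classes to ample classes (automorphisms are isomorphisms), and the ample cone is convex, $h$ is again ample; and $h$ is manifestly invariant under $\Psi(G)$, hence fixed by $g^*$ for every $g \in G$. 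Thus $G$ fixes the ample class $h$.

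The main subtlety — and the only place where real content enters rather than bookkeeping — is the easy-sounding direction: deducing that fixing an ample class forces finiteness. The point is that Proposition \ref{finite_kernel} only tells us $\ker\Psi$ is finite, so it would suffice to know that the image $\Psi(\Aut(X,h))$ of the stabilizer of $h$ is finite; and that image lands in $\Isometries(\Pic(X))_h$, the stabilizer of a single vector $h$ with $h^2>0$ inside the isometry group of a hyperbolic lattice, which is a finite group (the orthogonal complement of $h$ in $\Pic(X)$ is negative definite, so its isometry group is finite, and the isometry group of $\Pic(X)$ fixing $h$ embeds into it up to finite index). Combining this with the finiteness of $\ker\Psi$ gives finiteness of $\Aut(X,h)$, hence of $G$. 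I would present the argument in roughly this order: state the two directions, dispatch the converse via the averaging construction above, and then for the forward direction invoke Proposition \ref{finite_kernel} together with the finiteness of the stabilizer of a positive vector in $\Isometries(\Pic(X))$.
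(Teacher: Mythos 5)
Your proposal is correct and, once you strip away the initial hedging about Matsusaka--Mumford, it follows exactly the paper's route: averaging an ample class over the group for one direction, and for the other combining the finite kernel of $\Psi$ from Proposition \ref{finite_kernel} with the finiteness of the stabilizer of $h$ in $\Isometries(\NS_X)$, which holds because $h^{\perp}$ is negative definite and an isometry fixing $h$ is determined by its restriction to $h^{\perp}$. The only cosmetic difference is that the paper sums $g^{*}h$ over $g \in G$ rather than over $\Psi(G)$, and notes the determination by restriction to $h^{\perp}$ is exact rather than merely up to finite index; neither affects correctness.
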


\begin{proof}
If $G$ is finite, then let $h \in \Pic(X)=\NS_X$ be any ample class. Then the class $$\widetilde{h}=\sum_{g \in G}g^{*}h$$ is still ample and is invariant under $G$.

Now assume $G$ fixes an ample class $h$. Let $\Psi$ be as in Proposition \ref{finite_kernel}, and put $\Psi_{G} = \Psi_{|G}$, so $\ker \Psi_G$ is finite. Now define the quotient $\widetilde{G} = G/\ker \Psi_G$ and note that $\widetilde{G}$ acts faithfully on $\NS_X$. Take $N$ to be the orthogonal complement of $h$ in $\NS_X$. Since $\NS_X$ is of signature $(1,k)$ for some integer $k$ and $h^2 > 0$ as it is ample, then $N$ is negative definite, and therefore it has finitely many isometries (as for any integer $z$, there are only finitely many vectors $v \in N$ such that $v^2 = z$). But any isometry $f \in \Isometries(\NS_X)$ which fixes $h$ is uniquely determined by its restriction to $N$. In conclusion, $|\widetilde{G}| \le |\Isometries(N)|$ which means that $\widetilde{G}$ is finite. This implies that $G$ is also finite as $|G| = |\ker \Psi_G| \cdot |\widetilde{G}|$. 
\end{proof}

\cite[Proposition 4.1]{D} also contains the following:
\begin{proposition}\label{empty_kernel}
For a \hk{} manifold of type K3$^{[n]}$ $X$, the morphism
\begin{align*}
    \Psi \colon \Aut(X) \ni f \mapsto f^{*} \in \Isometries(\Homology^2(X, \mathbb{Z}))
\end{align*}
is injective.
\end{proposition}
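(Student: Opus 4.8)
The statement to prove is Proposition~\ref{empty_kernel}: for a hyper-K\"ahler manifold $X$ of type K3$^{[n]}$, the representation $\Psi\colon \Aut(X)\to \Isometries(\Homology^2(X,\mathbb{Z}))$, $f\mapsto f^*$, is injective. The plan is to combine the finiteness of $\ker\Psi$ (Proposition~\ref{finite_kernel}, which holds for arbitrary hyper-K\"ahler manifolds in the projective case, but whose proof via fixing a Fujiki/K\"ahler class works in the K\"ahler setting as well) with a rigidity argument that rules out nontrivial automorphisms acting trivially on $\Homology^2(X,\mathbb{Z})$.

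First I would record that any $f\in\ker\Psi$ acts as the identity on $\Homology^2(X,\mathbb{Z})$, hence on all of $\Homology^2(X,\mathbb{C})$, so in particular it fixes the class of a K\"ahler form and fixes the line $\mathbb{C}\sigma_X$ pointwise up to scalar — but since $f^*$ is trivial on cohomology it fixes $\sigma_X$ itself, so $f$ is symplectic. Next, the key input is that $\ker\Psi$ is finite: in the projective case this is exactly Proposition~\ref{finite_kernel}, and more generally one invokes the theorem of Beauville (the argument in \cite[Proposition 4.1]{D}) that the kernel of the action on $\Homology^2$ is finite for any hyper-K\"ahler manifold. So it suffices to show $\ker\Psi$ is torsion-free, equivalently that it has no element of prime order $p$.

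Suppose $f\in\ker\Psi$ has prime order $p$. Since $f$ acts trivially on $\Homology^2(X,\mathbb{Z})$ and $X$ is of K3$^{[n]}$-type, the action of $f$ on the full cohomology ring $\Homology^*(X,\mathbb{Q})$ is constrained: by the work on the Looijenga--Lunts--Verbitsky Lie algebra (or, more elementarily for this specific purpose, by the fact that for K3$^{[n]}$-type manifolds $\Homology^*(X,\mathbb{Q})$ is generated as a ring by $\Homology^2$ together with classes coming from $\Homology^4$ in a controlled way), a symplectic automorphism acting trivially on $\Homology^2$ acts trivially on enough of the cohomology that a holomorphic Lefschetz fixed-point computation forces a contradiction. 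Concretely, the cleanest route: $f$ is symplectic of finite order, so its fixed locus $X^f$ is a smooth subvariety; apply the holomorphic Lefschetz fixed-point formula to compute $\sum_i (-1)^i \operatorname{tr}(f^* \mid \Homology^i(X,\mathcal{O}_X))$. For K3$^{[n]}$-type, $\Homology^i(X,\mathcal{O}_X)=\Homology^{0,i}(X)$ is one-dimensional for $i=0,2,4,\dots,2n$ and zero otherwise, and because $f$ is symplectic it acts trivially on each $\Homology^{0,2j}(X)$ (these are spanned by powers of $\bar\sigma_X$, which $f$ fixes); hence the Euler-characteristic side equals $n+1\neq 0$, so $X^f\neq\varnothing$. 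One then shows the same vanishing/triviality of $f^*$ on the Dolbeault cohomology of $X^f$ and its normal bundle makes the contribution of every fixed component positive, but a genuine non-identity automorphism would have to have a fixed locus of strictly smaller dimension with a contribution incompatible with $n+1$ — the sharper statement, due to Beauville, is that for K3$^{[n]}$-type the only symplectic automorphism fixing a canonical class and acting trivially on $\Homology^2$ is the identity, because the Fujiki relations together with triviality on $\Homology^2$ force triviality on $\Homology^4$, and then on all of $\Homology^*$, whence Lefschetz gives $\chi_{\mathrm{top}}(X^f)=\chi_{\mathrm{top}}(X)$, forcing $X^f=X$ and $f=\mathrm{id}$.

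The main obstacle is the middle step: rigorously propagating triviality of $f^*$ from $\Homology^2(X,\mathbb{Z})$ to the whole cohomology ring. This is where one genuinely uses that $X$ is of K3$^{[n]}$-type rather than an arbitrary hyper-K\"ahler manifold — for general hyper-K\"ahler manifolds $\Psi$ need not be injective — and the correct tool is either Verbitsky's theorem that $\Homology^*(X,\mathbb{Q})$ for K3$^{[n]}$-type is (up to the relevant degree) generated by $\Homology^2$, or Beauville's explicit analysis in \cite{D}. Once that is in hand, the topological Lefschetz formula $\chi(X^f)=\sum_i(-1)^i\operatorname{tr}(f^*\mid \Homology^i(X,\mathbb{Q}))=\chi(X)$ closes the argument immediately: a finite-order automorphism with $X^f=X$ is the identity. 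Since every element of the finite group $\ker\Psi$ would, if nontrivial, contain an element of prime order to which this applies, we conclude $\ker\Psi=\{\mathrm{id}\}$, i.e.\ $\Psi$ is injective.
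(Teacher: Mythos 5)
The paper does not actually prove this proposition: its ``proof'' is the single citation \cite[Proposition 4.1]{D}, where the statement is deduced from Beauville's theorem that $\Aut(S^{[n]})$ acts faithfully on $\Homology^2(S^{[n]},\mathbb{Z})$ for a K3 surface $S$ (an automorphism trivial on $\Homology^2$ preserves the class, hence the support, of the exceptional divisor of $S^{[n]}\to S^{(n)}$, descends to an automorphism of $S$ trivial on $\Homology^2(S)$, hence is the identity), together with the deformation invariance of $\ker\bigl(\Aut(X)\to \Isometries(\Homology^2(X,\mathbb{Z}))\bigr)$ along the K3$^{[n]}$ deformation class. Your Lefschetz-type argument is therefore a genuinely different route, and its first steps are sound: elements of the kernel are symplectic, the kernel is finite, and the holomorphic Lefschetz formula (with $\Homology^i(X,\mathcal{O}_X)$ one-dimensional in even degrees $0,\dots,2n$ and $f^*$ trivial on powers of $\bar\sigma_X$) does show $X^f\neq\varnothing$.

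However, the two steps you yourself flag as delicate are genuine gaps, not just missing references. First, the propagation of triviality from $\Homology^2$ to $\Homology^*(X,\mathbb{Q})$ cannot be obtained from ``$\Homology^*$ is generated by $\Homology^2$'' or from the Fujiki relations: for $n\ge 3$ the subring generated by $\Homology^2$ is proper (e.g.\ $b_4(S^{[3]})=299$ while $\dim\mathrm{Sym}^2\Homology^2=276$), so one needs the LLV/monodromy machinery or, in effect, the deformation argument one was trying to avoid. Second, even granting $f^*=\mathrm{id}$ on all of $\Homology^*(X,\mathbb{Q})$, the implication ``$\chi_{\mathrm{top}}(X^f)=\chi_{\mathrm{top}}(X)$ forces $X^f=X$'' is a non sequitur: the topological Lefschetz formula only computes the Euler characteristic of the fixed locus, and a proper smooth fixed locus of a finite-order automorphism can a priori have the same Euler characteristic as $X$. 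Closing this would require an additional local analysis (the denominators in the holomorphic Lefschetz formula over the components of $X^f$, or the structure of the symplectic fixed locus), which is precisely what is not supplied. As written, the argument proves only that $X^f$ is nonempty and, conditionally, that $\chi(X^f)=\chi(X)$; it does not prove $f=\mathrm{id}$. The shortest correct repair is to replace the whole second half by the Beauville-plus-deformation-invariance argument that \cite{D} records.
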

Because of this, in the sequel we will identify actions of groups on any \KTST{} with the induced actions on the isometries of their integral second cohomology groups.

Now assume $X$ is a projective \KTST{}. As mentioned earlier, $\Homology^2(X, \mathbb{Z})$ has a lattice structure, it turns out to be isometric to the even lattice\footnote{$U$ is the \textit{hyperbolic plane}, a rank 2 lattice with the bilinear form $\begin{pmatrix} 0 & 1 \\ 1 & 0 \end{pmatrix}$, and $E_8$ is the lattice corresponding to the Dynkin diagram; we adopt the convention that $E_8$ is positive definite.} $L_{\text{K3}^{[2]}} = E_{8}(-1)^{\oplus 2} \oplus U^{\oplus 3} \oplus \langle -2 \rangle$.

\par
Now we are ready to state the first Torelli type theorem relevant for us (for simplicity's sake we will only state it for \KTST{}).
\begin{theorem}\label{Torelli_morphisms}
Let $(X, h)$ and $(X', h')$ be two polarized \KTSTs{}, and let
\begin{align*}
    \varphi \colon \Homology^2(X', \mathbb{Z}) \rightarrow \Homology^2(X, \mathbb{Z}),
\end{align*}
be an isometry of lattices such that $\varphi(h') = h$ and $\varphi_{\mathbb{C}}(\Homology^{2,0}(X')) = \Homology^{2,0}(X)$ where $\varphi_{\mathbb{C}}$ is the extension of $\phi$ to the automorhpism of $\Homology^2(X', \mathbb{C}) = \Homology^2(X', \mathbb{Z}) \otimes \mathbb{C}$. Then there exists a biholomorphic mapping $f \colon X \rightarrow X'$ such that $\varphi = f^{*}$.
\end{theorem}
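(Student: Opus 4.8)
The plan is to deduce Theorem~\ref{Torelli_morphisms} from the Global Torelli theorem for \hk{} manifolds (due to Verbitsky, in the form refined by Markman; see also \cite{D}), specialised to K3$^{[2]}$-type. The precise input to invoke is: if $Y$ and $Y'$ are of K3$^{[2]}$-type and $\psi \colon \Homology^2(Y', \mathbb{Z}) \to \Homology^2(Y, \mathbb{Z})$ is a parallel-transport operator which is moreover a Hodge isometry mapping a \kh{} class of $Y'$ to a \kh{} class of $Y$, then $\psi$ is induced by a biholomorphism $Y \to Y'$, unique by Proposition~\ref{empty_kernel}. So the proof reduces to checking that the given $\varphi$ has these three properties; it is a Hodge isometry by hypothesis, so only ``parallel-transport operator'' and ``sends a \kh{} class to a \kh{} class'' remain.

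For the first of these I would argue as follows. As $X$ and $X'$ are both of K3$^{[2]}$-type they are deformation equivalent, so there is a parallel-transport operator $\psi_0 \colon \Homology^2(X', \mathbb{Z}) \to \Homology^2(X, \mathbb{Z})$, and, being given by parallel transport, $\psi_0$ preserves the canonical orientations of the maximal positive-definite subspaces (the one spanned, for a complex structure with \kh{} class $\omega$ and symplectic form $\sigma$, by $\mathrm{Re}\,\sigma$, $\mathrm{Im}\,\sigma$, $\omega$). Hence $\varphi$ is a parallel-transport operator if and only if $\psi_0^{-1} \circ \varphi$ lies in the monodromy group of $\Homology^2(X', \mathbb{Z})$, which for K3$^{[2]}$-type is the \emph{whole} group of orientation-preserving isometries: indeed the discriminant group of $L_{\text{K3}^{[2]}} = E_8(-1)^{\oplus 2} \oplus U^{\oplus 3} \oplus \langle -2\rangle$ is $\mathbb{Z}/2\mathbb{Z}$, so $\Isometries(D_{L_{\text{K3}^{[2]}}})$ is trivial and Markman's description of the monodromy group leaves only the orientation condition. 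It therefore suffices to see that $\varphi$ itself is orientation-preserving. But $\varphi_{\mathbb{C}}$ sends $\sigma_{X'}$ into $\mathbb{C}\sigma_X$, while $\varphi$ sends the ample---hence \kh{}---class $h'$ to the ample---hence \kh{}---class $h$; and since multiplication by a non-zero complex scalar preserves the orientation of $\mathbb{C}$ as an oriented real plane, $\varphi$ carries the canonically oriented positive $3$-space $\langle \mathrm{Re}\,\sigma_{X'}, \mathrm{Im}\,\sigma_{X'}, h'\rangle$ of $X'$ onto the canonically oriented one of $X$. Thus $\varphi$ is orientation-preserving, hence a parallel-transport operator.

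The second property is now immediate: $h'$ is ample, hence lies in the \kh{} cone of $X'$, and $\varphi(h') = h$ is ample, hence lies in the \kh{} cone of $X$. (Alternatively, one could first extract a birational map between $X$ and $X'$ from Torelli and then promote it to a biholomorphism, using that it pulls the ample class $h$ back to the ample class $h'$ and so contracts no divisor.) With all hypotheses in place, the Global Torelli theorem furnishes the desired $f \colon X \to X'$ with $f^{*} = \varphi$, unique by Proposition~\ref{empty_kernel}. I expect the only delicate point to be bookkeeping of the imported results: fixing orientation conventions carefully enough to land $\varphi$ in the orientation-preserving subgroup, and citing the version of Global Torelli that detects isomorphisms through \kh{} cones rather than the coarser one detecting only birational equivalence. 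It is worth stressing that the identification of the monodromy group with the full orientation-preserving subgroup is special to $n = 2$ (for larger $n$ one must also control the action on a bigger discriminant group), and this is precisely what makes the parallel-transport hypothesis automatic in our setting.
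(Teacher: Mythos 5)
Your argument is correct and follows essentially the same route as the paper, whose entire proof is the citation of Markman's Hodge-theoretic Torelli theorem (\cite[Theorem 1.3]{Markman}). You additionally write out the verification, left implicit in the paper, that the hypotheses of that theorem are met: $\varphi$ is automatically a parallel-transport operator because for $n=2$ the monodromy group is the full group of orientation-preserving isometries (the discriminant group being $\mathbb{Z}/2\mathbb{Z}$), and the conditions $\varphi_{\mathbb{C}}(\Homology^{2,0}(X'))=\Homology^{2,0}(X)$ and $\varphi(h')=h$ with $h', h$ ample force $\varphi$ to preserve the canonical orientation of the positive three-space.
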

\begin{proof}
    \cite[Theorem 1.3]{Markman}.
\end{proof}

There is another Torelli type theorem important to our discussion. Let us pick a primitive element $h \in L_{\text{K3}^{[2]}}$ of positive square. Let us define the manifold
$\Omega_h = \{ [x] \in \mathbb{P}(L_{\text{K3}^{[2]}} \otimes \mathbb{C}) \, | \, x \cdot h = x^2 = 0, \, x \cdot \bar{x} > 0\}$ and the subgroup of the isometry group $\Hat{O}(L_{\text{K3}^{[2]}}, h) = \{\varphi \in \Isometries(L_{\text{K3}^{[2]}}): \, \varphi(h) = h, \, \bar{\varphi} = \pm \text{Id} \colon D_{L_{\text{K3}^{[2]}}} \to D_{L_{\text{K3}^{[2]}}} \}$. $\Hat{O}(L_{\text{K3}^{[2]}}, h)$ acts on $\Omega_h$ which allows us to define the \emph{period domain} $\mathscr{P}_h = \Omega_h/\Hat{O}(L_{\text{K3}^{[2]}}, h)$. An element of $\mathscr{P}_h$ is called a \emph{period}. 

Let $\mathscr{M}_{2n}^{(\gamma)}$ be the moduli space of polarized \KTSTs{} with polarization of \BB{} degree $2n$ ($n > 0$) and divisibility $\gamma \in \{1,2\}$. There exists a \emph{period map} 
\begin{align*}
    \wp_h \colon \mathscr{M}_{2n}^{(\gamma)} \ni X \mapsto [\mathbb{C} \eta^{-1}(\sigma_X)] \in \mathscr{P}_h
\end{align*}
which sends an $h$-polarized manifold $X$ onto the class of the inverse image of its symplectic form by the marking $\eta \colon \Homology^2(X, \mathbb{Z}) \to L_{\text{K3}^{[2]}}$. 
\begin{theorem}\label{Torelli_period}
    The moduli space $\mathscr{M}_{2n}^{(\gamma)}$ is irreducible. The period map 
    \begin{align*}
        \wp_h \colon \mathscr{M}_{2n}^{(\gamma)} \rightarrow \mathscr{P}_h
    \end{align*}
    is injective. The image $\wp_h(\mathscr{M}_{2n}^{(\gamma)})$ consists of periods $p$ in $\mathscr{P}_h$ for which there are no vectors $v \in L_{\text{K3}^{[2]}}$ such that for a choice (hence any choice) of $\sigma \in L_{\text{K3}^{[2]}} \otimes \mathbb{C}$ of class $p$ in $\mathscr{P}_h$:
    \begin{enumerate}[(i)] \label{a}
        \item $h \cdot v = 0$,
        \item $v \cdot \sigma = v \cdot \bar{\sigma} = 0$,
        \item $v^2 = -10$, and $\di(v) = 2$; or $v^2 = -2$.
    \end{enumerate}
    \end{theorem}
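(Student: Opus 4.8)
The plan is to obtain all three assertions by combining the Hodge-theoretic Torelli theorem for morphisms (Theorem~\ref{Torelli_morphisms}, from \cite{Markman}), the surjectivity of the period map onto the full period domain of marked manifolds of K3$^{[2]}$-type (\cite{Huybrechts2}), and the description of the (birational) Kähler cone and of the wall-divisor classes of such manifolds (\cite{Markman}, \cite{D} and references therein). I would begin by fixing a marking convention: since $h$ is primitive with $h^2 = 2n > 0$ and divisibility $\gamma$, and primitive vectors of $L_{\text{K3}^{[2]}}$ of prescribed square and divisibility form a single $\Isometries(L_{\text{K3}^{[2]}})$-orbit, every polarized $(X, h_X) \in \mathscr{M}_{2n}^{(\gamma)}$ admits a marking $\eta \colon \Homology^2(X, \mathbb{Z}) \to L_{\text{K3}^{[2]}}$ with $\eta(h_X) = h$, any two such differing by an element of $\Hat{O}(L_{\text{K3}^{[2]}}, h)$; this is what makes $\wp_h$ well defined, and its value lands in $\Omega_h$ because $\eta^{-1}(h)$ is a real integral class orthogonal to $\sigma_X$, hence lies in $\NS_X$ (in particular every such $X$ is projective).

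\emph{Injectivity.} If $(X, h_X)$ and $(X', h_{X'})$ have equal period, pick markings $\eta, \eta'$ as above; equality in $\mathscr{P}_h = \Omega_h / \Hat{O}(L_{\text{K3}^{[2]}}, h)$ yields $\psi \in \Hat{O}(L_{\text{K3}^{[2]}}, h)$ with $\psi_{\mathbb{C}}(\eta'_{\mathbb{C}}(\sigma_{X'})) \in \mathbb{C}\,\eta_{\mathbb{C}}(\sigma_X)$. Then $\varphi := \eta^{-1} \circ \psi \circ \eta' \colon \Homology^2(X', \mathbb{Z}) \to \Homology^2(X, \mathbb{Z})$ is an isometry carrying $h_{X'}$ to $h_X$ (as $\psi(h) = h$) and $\Homology^{2,0}(X')$ to $\Homology^{2,0}(X)$, so Theorem~\ref{Torelli_morphisms} gives a biholomorphism $f \colon X \to X'$ with $f^{*} = \varphi$; since $f^{*}(h_{X'}) = h_X$, this is an isomorphism of polarized manifolds. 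This part requires no monodromy input.

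\emph{The image.} Let $\mathscr{P}_h^{\circ} \subset \mathscr{P}_h$ be the set of periods admitting no vector $v$ satisfying (i)--(iii). For the inclusion $\wp_h(\mathscr{M}_{2n}^{(\gamma)}) \subset \mathscr{P}_h^{\circ}$: given $p = \wp_h(X, h_X)$ and a hypothetical such $v$, condition (ii) puts $\eta^{-1}(v) \in \NS_X$, condition (iii) gives it square $-2$ or ($-10$ with divisibility $2$) — note $v$ is then automatically primitive — and such a class is a wall divisor, so $\eta^{-1}(v)^{\perp}$ supports a wall of the positive cone; an ample class lies in the interior of a chamber, forcing $h_X \cdot \eta^{-1}(v) \neq 0$, which contradicts (i). Conversely, given $p \in \mathscr{P}_h^{\circ}$, surjectivity of the period map provides a marked manifold $(X, \eta)$ of K3$^{[2]}$-type with $\eta_{\mathbb{C}}(\sigma_X)$ representing a lift of $p$; then $h_X := \eta^{-1}(h) \in \NS_X$ has positive square and, by hypothesis, is orthogonal to no wall-divisor class in $\NS_X$, so it lies in the interior of a chamber of the positive cone. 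Invoking the chamber description of the Kähler cone for K3$^{[2]}$-type — every such chamber is the ample cone of a birational model, and all birational models are again of K3$^{[2]}$-type — I obtain a manifold $X'$ birational to $X$ together with the induced Hodge isometry $\psi \colon \Homology^2(X, \mathbb{Z}) \to \Homology^2(X', \mathbb{Z})$, for which $\psi(h_X)$ is ample on $X'$; the marking $\eta \circ \psi^{-1}$ then exhibits $(X', \psi(h_X))$ as an $h$-polarized manifold with the same period $p$, so $p \in \wp_h(\mathscr{M}_{2n}^{(\gamma)})$.

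\emph{Irreducibility.} This should follow formally. The period map $\wp_h$ is a local isomorphism (infinitesimal Torelli), hence open; its image $\mathscr{P}_h^{\circ}$ is the complement in $\mathscr{P}_h$ of a countable union of proper closed analytic subsets (the loci where some $v$ as in (iii) occurs), and since $\mathscr{P}_h$ is irreducible the open set $\mathscr{P}_h^{\circ}$ is connected. Writing $\mathscr{M}_{2n}^{(\gamma)}$ as the disjoint union of its connected components, their $\wp_h$-images are open, pairwise disjoint by injectivity, and cover $\mathscr{P}_h^{\circ}$; connectedness forces a single component, and since $\mathscr{M}_{2n}^{(\gamma)}$ is a normal quasi-projective variety this makes it irreducible. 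The main obstacle is the converse part of the image description — turning the purely lattice-theoretic absence of vectors satisfying (i)--(iii) into the geometric statement that $h$ becomes ample on a birational model of the same type. This rests on the classification of wall divisors of K3$^{[2]}$-type manifolds (precisely the primitive $(-2)$-classes and the primitive $(-10)$-classes of divisibility $2$) and on the fact that each chamber of the positive cone cut out by the associated walls is the ample cone of a birational model; these are nontrivial inputs from \cite{Markman} and \cite{D} rather than elementary lattice theory, whereas injectivity and irreducibility are comparatively routine once Theorem~\ref{Torelli_morphisms} is available.
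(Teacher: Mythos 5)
The paper does not prove this statement itself; its ``proof'' is the citation to \cite[Theorem 3.22 and Remark 3.33]{D}, and your sketch reconstructs essentially the argument given there: Theorem~\ref{Torelli_morphisms} for injectivity, surjectivity of the unpolarized period map combined with the wall-divisor/birational-model chamber description for the characterization of the image, and openness of $\wp_h$ plus connectedness of the complement of countably many walls for irreducibility. Your outline is correct and matches the cited source's strategy, so there is nothing of substance to add beyond the routine care needed with orientations of the positive cone and the two components of $\Omega_h$.
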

\begin{proof}
    \cite[Theorem 3.22 (and its proof) and Remark 3.33]{D}. 
\end{proof}
Divisors satisfying the last condition from the lemma (a divisor also automatically will satisfy the second one) are usually called wall divisors.

\par

\section{Finding automorphisms} \label{construction}
Lattice $L_{\text{K3}^{[2]}}$ is not unimodular, but it is close to being one, as a direct sum of a unimodular lattice and a vector of square $-2$, so in the situation from Proposition \ref{partial_monomorphism} we will always get a full anti-embedding in one direction. In particular, as shown in \cite{HM} (Theorem 7.1, 8.3)
\begin{lemma} \label{monomorphism}
Let $X$ be a \KTST{}, $\widetilde{G}$ a finite subgroup of $\Aut_s(X)$. Put $L \defeq \Homology^2(X, \mathbb{Z})$. Then $\widetilde{G}$ acts on $L$ and there exists a canonical anti-embedding
\begin{align*}
    \gamma \colon D_{L_{\widetilde{G}}} \rightarrow D_{L^{\widetilde{G}}}.
\end{align*}
Moreover, the image of $\gamma$ is the subgroup of signature 2 in $D_{L^{\widetilde{G}}}$.
\end{lemma}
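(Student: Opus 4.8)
The plan is to derive both assertions from Proposition~\ref{partial_monomorphism} together with the near-unimodularity of $L\defeq L_{\text{K3}^{[2]}}$, for which $D_L\cong\mathbb{Z}/2\mathbb{Z}$ with discriminant form $\langle-\tfrac12\rangle$. First I would set up the geometry. Since the elements of $\widetilde G$ are symplectic they fix $\sigma_X$, hence fix $\mathrm{Re}\,\sigma_X$ and $\mathrm{Im}\,\sigma_X$, which span a positive-definite real $2$-plane; averaging a K\"ahler class over $\widetilde G$ produces an invariant class of positive square orthogonal to these. So $L^{\widetilde G}\otimes\mathbb{R}$ contains a maximal positive subspace: $L^{\widetilde G}$ has signature $(3,20-r)$ and $L_{\widetilde G}=(L^{\widetilde G})^{\perp}$ is negative definite of some rank $r$, and both are primitive in $L$. (That $\widetilde G$ acts on $L$ by isometries is Proposition~\ref{empty_kernel}.)

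Next I would apply Proposition~\ref{partial_monomorphism} with $N=L_{\widetilde G}$, $N^{\perp}=L^{\widetilde G}$. The canonical subgroup $H\defeq L/(L_{\widetilde G}\oplus L^{\widetilde G})\subseteq D_{L_{\widetilde G}}\oplus D_{L^{\widetilde G}}$ is isotropic for the glue form and is the graph of an anti-isometry $\gamma\colon A\to B$ with $A\subseteq D_{L_{\widetilde G}}$, $B\subseteq D_{L^{\widetilde G}}$, $|A|=|B|=|H|=[L:L_{\widetilde G}\oplus L^{\widetilde G}]$; since $H$ is canonical, so is $\gamma$. Now the near-unimodularity enters: $H^{\perp}/H\cong D_L$ has order $2$, and, writing $A^{\perp}\subseteq D_{L_{\widetilde G}}$, $B^{\perp}\subseteq D_{L^{\widetilde G}}$ for the orthogonal complements (so $A^{\perp}\oplus B^{\perp}\subseteq H^{\perp}$), one computes
\begin{align*}
 |A^{\perp}|\cdot|B^{\perp}| \;=\; \frac{|D_{L_{\widetilde G}}|\,|D_{L^{\widetilde G}}|}{|H|^{2}} \;=\; |D_L| \;=\; 2 .
\end{align*}
Hence exactly one of $A=D_{L_{\widetilde G}}$ or $B=D_{L^{\widetilde G}}$ holds, the other image having index $2$. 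This ``full anti-embedding in one direction'' uses nothing beyond primitivity and $|D_L|=2$.

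The crux is to show the full direction is $A=D_{L_{\widetilde G}}$, i.e.\ to exclude $B=D_{L^{\widetilde G}}$. In that excluded case Milgram's signature formula (applied once the index is $2$) would force $D_{L_{\widetilde G}}\cong A\perp\langle-\tfrac12\rangle$, equivalently $L_{\widetilde G}$ would carry a divisibility-$2$ class of square $\equiv-2\pmod 8$ spanning that rank-$1$ summand — in particular a wall divisor (Theorem~\ref{Torelli_period}) if that square is $-2$ or $-10$, and in all cases a class excluded by what is known about coinvariant lattices of finite symplectic groups. Here the symplectic hypothesis is essential, since $L_{\widetilde G}\subseteq\NS_X$ lies orthogonal to an ample class: by the classification of these coinvariant lattices in \cite{HM} (equivalently, their primitive embeddings into the negative Leech lattice) none of them has a discriminant form splitting off $\langle-\tfrac12\rangle$. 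This rules out $B=D_{L^{\widetilde G}}$, so $A=D_{L_{\widetilde G}}$ and $\gamma\colon D_{L_{\widetilde G}}\hookrightarrow D_{L^{\widetilde G}}$ is an anti-embedding. I expect this ``direction'' step to be the main obstacle, as it is the only place the argument distinguishes symplectic automorphisms from arbitrary finite isometry groups of $L$; when $X$ is moreover birational to a Hilbert square $S^{[2]}$ with $\widetilde G$ induced from $S$ there is a shortcut, since then the exceptional class $\delta$ is $\widetilde G$-invariant, $L^{\widetilde G}=(\Homology^2(S,\mathbb{Z}))^{\widetilde G}\oplus\langle\delta\rangle$, and Proposition~\ref{isomorphism} applied to the unimodular lattice $\Homology^2(S,\mathbb{Z})$ gives $D_{L^{\widetilde G}}\cong D_{L_{\widetilde G}}(-1)\perp\langle-\tfrac12\rangle$ at once — but not every symplectic $\widetilde G$ arises this way.

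Finally, for the ``moreover'': with $A=D_{L_{\widetilde G}}$ we get a decomposition $D_{L^{\widetilde G}}=B\perp C$, where $B=\gamma(D_{L_{\widetilde G}})$ carries the form anti-isometric to $q_{D_{L_{\widetilde G}}}$ (hence nondegenerate, so it splits off) and $C$ is cyclic of order $2$; Milgram's formula again forces $q_C\cong\langle-\tfrac12\rangle=q_{D_L}$, so $C$ is the ``$\delta$-direction'', the order-$2$ piece of $D_{L^{\widetilde G}}$ accounted for by the $\langle-2\rangle$-summand of $L_{\text{K3}^{[2]}}$. Therefore $\gamma(D_{L_{\widetilde G}})=B=C^{\perp}$ is exactly the complementary ``signature $2$'' subgroup of the statement. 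This last identification is immediate once the previous step is in hand.
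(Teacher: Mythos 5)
The paper does not actually prove this lemma: it is quoted from \cite{HM} (Theorems 7.1 and 8.3), preceded only by the one-line heuristic that $L_{\text{K3}^{[2]}}$ is a unimodular lattice plus $\langle -2\rangle$, ``so \ldots{} we will always get a full anti-embedding in one direction.'' Your setup and counting argument formalize that heuristic correctly: $L_{\widetilde G}$ is negative definite, both sublattices are primitive, and $|A^{\perp}|\cdot|B^{\perp}|=|D_{L_{\widetilde G}}||D_{L^{\widetilde G}}|/|H|^{2}=|D_{L}|=2$ does show that exactly one of the two projections of the glue group is all of the corresponding discriminant group. Up to that point your route is the intended one.

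The gap is in the step you yourself flag as the crux, namely excluding $B=D_{L^{\widetilde G}}$. Milgram's formula does \emph{not} produce a contradiction there: in that case $D_{L_{\widetilde G}}\cong A\perp A^{\perp}$ with $\operatorname{sign}(q_{A})\equiv -3\equiv 5$ and $\operatorname{sign}(q_{D_{L_{\widetilde G}}})\equiv -20\equiv 4 \pmod 8$, which forces $q_{A^{\perp}}\cong\langle -\tfrac12\rangle$ of signature $-1$ --- perfectly consistent (the same check is equally consistent for the other direction, so signatures alone cannot decide which side is full). Your argument therefore rests entirely on the assertion that no $D_{L_{\widetilde G}}$ splits off $\langle -\tfrac12\rangle$. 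That is a genuine case-by-case verification against the classification in \cite{HM}, which you do not carry out and which is not automatic: for several of the fifteen groups (e.g.\ $\mathbb{Z}_2^4\colon S_5$, where $|D_{L_{\widetilde G}}|=160$) the $2$-part of the discriminant group is nontrivial, so one must actually inspect the form; and for the groups where it does hold it is essentially the content of the very theorems of \cite{HM} being cited. The attempted shortcut via wall divisors also does not close this: an orthogonal $\langle -\tfrac12\rangle$ summand corresponds to a vector $v\in L_{\widetilde G}$ with $\tfrac{v}{2}\in L_{\widetilde G}^{\vee}$ and $v^{2}\equiv -2\pmod 8$, which need not have square $-2$ or $-10$, and the divisibility entering the wall-divisor condition of Theorem \ref{Torelli_period} is divisibility in $L$, not in $L_{\widetilde G}$. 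Finally, in the ``moreover'' part you assert without computation that $B=\gamma(D_{L_{\widetilde G}})$ is ``the subgroup of signature $2$''; the signature of $q_{B}$ one actually gets from Milgram is $\equiv 4\pmod 8$, so at minimum you need to reconcile your conventions with those of \cite{HM} before claiming the identification is ``immediate.''
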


One can construct a natural isomorphism $D_{L} \cong D_{L^{\widetilde{G}}}/D_{L_{\widetilde{G}}}$ in the above setting using the map $\gamma$ and projections from $L^{\vee}$ to $(L^{\widetilde{G}})^{\vee}$ and $(L_{\widetilde{G}})^{\vee}$\footnote{Strictly speaking, these are projections from $(L^{\widetilde{G}})^{\vee} \oplus (L_{\widetilde{G}})^{\vee}$ restricted to $L^{\vee}$}.

The following simple lemma will be of crucial importance for us.
\begin{lemma} \label{trace_lemma}
Let $X$ be a projective \KTST{}, $L = \Homology^2(X, \mathbb{Z})$, $\widetilde{G}$ a finite group acting faithfully and symplectically on $X$\footnote{As always, we identify actions of $\widetilde{G}$ on both manifold and the lattice.} such that $\rk L^{\widetilde{G}} = 3$. Let $\hat{f}$ be a finite nonsymplectic automorphism of $X$. Then $f$, the induced action of $\hat{f}$ on $L^{\widetilde{G}}$, has finite order $k$ for some integer $k > 1$ and hence has eigenvalues 1, $\zeta_k$, and $\overline{\zeta_k}$ where $\zeta_k$ is a primitive $k$-th root of unity. In particular:
\begin{enumerate}[(i)]
\item if $f$ has order 2, then $\Tr f = -1$, furthermore $f_{|T_x} = -\text{id}_{T_X}$;
\item if $f$ has order 3, then $\Tr f = 0$;
\item if $f$ has order 4, then $\Tr f = 1$;
\item if $f$ has order 6, then $\Tr f = 2$.
\end{enumerate}
These are all the possibilities. Moreover, each such morphism $f$ preserves the isotropic lines\footnote{I.~e.\ lines on which the induced quadratic form is 0.} in $L^{\widetilde{G}} \otimes \mathbb{C}$ .
\end{lemma}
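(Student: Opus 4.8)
The plan is to transport the problem to the rank‑$3$ positive‑definite lattice $L^{\widetilde{G}}$ and classify the finite‑order integral isometries that can occur on it, using the Hodge‑theoretic position of $\sigma_X$ to eliminate the degenerate possibilities. First I would record the structure of $L^{\widetilde{G}}$: since $\hat f$ normalizes $\widetilde{G}$ (which is implicit in speaking of the action $f$ induced on $L^{\widetilde{G}}$), the group $G_0\defeq\langle\widetilde{G},\hat f\rangle$ is finite, $\hat f^{*}$ preserves $L^{\widetilde{G}}$, and $f=\hat f^{*}|_{L^{\widetilde{G}}}$ is an isometry of finite order $k$. Summing the $G_0$‑translates of an ample class produces $h_0\in L^{\widetilde{G}}$ with $h_0^{2}>0$ and $f(h_0)=h_0$; and since $\widetilde{G}$ is symplectic, $\sigma_X$ is $\widetilde{G}$‑invariant, so $\sigma_X\in(L\otimes\mathbb{C})^{\widetilde{G}}=L^{\widetilde{G}}\otimes\mathbb{C}$. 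The real and imaginary parts of $\sigma_X$ span a positive‑definite plane (using $\sigma_X^{2}=0$, $\sigma_X\cdot\bar\sigma_X>0$) orthogonal to $h_0$, so the rank‑$3$ lattice $L^{\widetilde{G}}$ must be positive definite. Moreover $\T_X\perp\NS_X\supseteq L_{\widetilde{G}}$ forces $\T_X\subseteq(L_{\widetilde{G}})^{\perp}=L^{\widetilde{G}}$ and $\T_X\perp h_0$, and since $\sigma_X,\bar\sigma_X\in\T_X\otimes\mathbb{C}$ are linearly independent while $\langle h_0\rangle^{\perp}\cap(L^{\widetilde{G}}\otimes\mathbb{R})$ is only a plane, I get $\rk\T_X=2$, an identification of $\T_X\otimes\mathbb{R}$ with that plane (re‑proving $\rk\Pic X=21$), and $\T_X\otimes\mathbb{C}=\mathbb{C}\sigma_X\oplus\mathbb{C}\bar\sigma_X$ with a rank‑$2$ hyperbolic form whose only isotropic lines are $\mathbb{C}\sigma_X$ and $\mathbb{C}\bar\sigma_X$.

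Next I would pin down the eigenvalues of $f$. Since $\hat f$ is holomorphic it preserves $\Homology^{2,0}(X)=\mathbb{C}\sigma_X$, so $\hat f^{*}\sigma_X=\lambda\sigma_X$ with $\lambda$ a root of unity ($\hat f$ finite) and $\lambda\ne1$ ($\hat f$ nonsymplectic); hence $f$ has the eigenvalue $\lambda\ne1$ on $\sigma_X\in L^{\widetilde{G}}\otimes\mathbb{C}$, so $f\ne\mathrm{id}$ and $k>1$. Now $f$ fixes the positive line $\mathbb{R}h_0$ and acts on its orthogonal complement $P=\T_X\otimes\mathbb{R}$, a positive‑definite plane, by a finite‑order element of $\mathrm{O}(2)$ — a rotation or a reflection. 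I would then exclude the reflection: a reflection would make the $(-1)$‑eigenspace of $f$ one‑dimensional, but $\hat f^{*}$ is defined over $\mathbb{Z}$ so $\hat f^{*}\bar\sigma_X=\bar\lambda\bar\sigma_X$, and this would force $\lambda=-1$ with both $\sigma_X$ and $\bar\sigma_X$ spanning that single line, whence $\bar\sigma_X\in\mathbb{C}\sigma_X$ and $\sigma_X\cdot\bar\sigma_X=0$ — contradicting $\sigma_X\cdot\bar\sigma_X>0$. So $f|_P$ is a rotation through $2\pi/k$, and $f$ has eigenvalues $1,\zeta_k,\overline{\zeta_k}$ with $\zeta_k$ a primitive $k$‑th root of unity (and $\lambda\in\{\zeta_k,\overline{\zeta_k}\}$).

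It then remains to read off the list and the two extra assertions. Because $f$ preserves the lattice $L^{\widetilde{G}}$, its characteristic polynomial $(t-1)\bigl(t^{2}-(\zeta_k+\overline{\zeta_k})t+1\bigr)$ has integer coefficients, so $2\cos(2\pi/k)=\zeta_k+\overline{\zeta_k}$ is an integer in $[-2,2)$ — it is not $2$ since $k>1$ — which leaves exactly $k\in\{2,3,4,6\}$, with $\Tr f=1+2\cos(2\pi/k)$ equal to $-1,0,1,2$ respectively; this is precisely (i)–(iv) and no other case occurs. For (i), where $k=2$, the $(+1)$‑eigenspace of $f$ is $\mathbb{R}h_0$, so $\T_X\otimes\mathbb{R}=P$ is the $(-1)$‑eigenspace, and therefore $f|_{\T_X}$ acts as $-\mathrm{id}$ over $\mathbb{R}$, hence equals $-\mathrm{id}_{\T_X}$. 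Finally $\sigma_X$ and $\bar\sigma_X$ are eigenvectors of $f$ (once more using that $\hat f^{*}$ commutes with conjugation), so $f$ leaves invariant the lines they span — that is, the two isotropic lines of $\T_X\otimes\mathbb{C}\subseteq L^{\widetilde{G}}\otimes\mathbb{C}$.

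The one step I expect to require real care is the second one: getting $L^{\widetilde{G}}$ and $\T_X$ into the shape ``(positive line) $\perp$ (transcendental plane)'' and then ruling out the reflection, since this is exactly where the positivity $\sigma_X\cdot\bar\sigma_X>0$ and the reality of $\hat f^{*}$ are used in an essential way. After that, the finiteness of the list of orders and the trace values are forced purely by the integrality of the characteristic polynomial of a finite‑order isometry of a rank‑$3$ definite lattice, so those steps should be routine.
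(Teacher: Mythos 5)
Your proof is correct, and its core runs along the same lines as the paper's: both identify $\sigma_X$ and $\overline{\sigma_X}$ as eigenvectors of $f$ with conjugate, non-unit root-of-unity eigenvalues, produce the third eigenvalue $1$ from a fixed vector obtained by averaging a K\"ahler class over the finite group $\langle\widetilde{G},\hat f\rangle$, and then read off $k\in\{2,3,4,6\}$ and the trace values from integrality of the characteristic polynomial of $f$ on the rank-3 lattice. Where you genuinely diverge is the final assertion on isotropic lines. You prove it Hodge-theoretically: the two isotropic lines of the complexified positive-definite plane are exactly $\mathbb{C}\sigma_X$ and $\mathbb{C}\overline{\sigma_X}$, which are eigenlines of $f$ and hence preserved --- short and transparent. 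The paper instead argues purely linear-algebraically on the plane $W=L^{\widetilde{G}}_{\langle f\rangle}\otimes\mathbb{C}$, case by case in the order of $f$ (order $2$ gives $-\mathrm{id}$; a map swapping the two lines has even order, which rules out order $3$ and then $6$; for order $4$ one gets $w\cdot f(w)=0$ and positive definiteness forces $w$ and $f(w)$ onto one line). The paper's version buys something worth noting: it applies verbatim to an abstract \emph{good} isometry of a rank-3 positive definite lattice, with no automorphism $\hat f$ in sight, and that is precisely how the statement is reused in the construction of Section 2, where $f$ is chosen abstractly before any manifold is produced; your argument covers the lemma as stated but would not transfer to that abstract setting. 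Two minor points: the finiteness of $\langle\widetilde{G},\hat f\rangle$ (equivalently, that $\hat f^{*}$ preserves $L^{\widetilde{G}}$), which you correctly flag as implicit, is likewise left implicit in the paper; and your ``rotation through $2\pi/k$'' should read ``through $2\pi j/k$ with $\gcd(j,k)=1$'', which changes nothing since $\zeta_k+\overline{\zeta_k}\in\mathbb{Z}$ still forces $k\in\{2,3,4,6\}$ and the same traces.
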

\begin{proof}
Denote the map induced on $L^{\widetilde{G}} \otimes \mathbb{C}$ again by $f$. We have $f(\sigma_X) = \zeta_k \cdot \sigma_X$ where $\zeta_k$ is a primitive root of unity of order $k$, and therefore also $f(\overline{\sigma_X}) = \overline{\zeta_k} \cdot \overline{\sigma_X}$.  Now notice that there exists a nonzero element of $L \otimes \mathbb{C}$ that is invariant for the whole $\langle \widetilde{G}, f \rangle$ (a \kh{} class $\sum_{g \in \langle \widetilde{G}, f \rangle} g( \kappa$), where $\kappa$ is any \kh{} class).
\par Now for the last assertion, $W = L^{\widetilde{G}}_{\langle f \rangle} \otimes \mathbb{C}$ is a 2-dimensional space with an induced positive definite form, so it has exactly two isotropic lines. If $f$ is of order 2, then it is minus identity on $W$ by above discussion, so it preserves the lines. If $f$ does not preserve the lines, then it has to map one into the other, so since it is finite, it must be of even order, that settles the case of order 3. Order 2 and 3 cases together settle order 6 case. Now for $f$ of order 4, $f^2$ must be minus identity since it has order 2. Then for a vector $w \in W$ we have $w \cdot f(w) = f(w) \cdot f^2(w) = - w \cdot f(w)$, so it is 0. But since the bilinear form is positive definite and $w^2 = (f(w))^2 = 0$, it must mean that $w$ and $f(w)$ lie on a single line (otherwise the form would be identically 0).
\end{proof}
\begin{notation}\label{good_notation}
We will call an isometry of a rank 3 lattice \textit{\fancyname{}} if it has order 2, 3, 4 or 6 and satisfies the respective condition from Lemma \ref{trace_lemma}.
\end{notation}

Consider a group $\widetilde{G}$ which is one of the 15 maximal finite groups which act on some \KTST{} via symplectic automorphisms. Our goal is to analyze all finite groups $G$ containing $\widetilde{G}$ as a subgroup for which there exists a \KTST{} X, such that $G$ has a representation as a subgroup of $\Aut(X)$ and $\widetilde{G}= G \cap \Aut_s(X)$ under this representation.

So let us fix $X$, a projective \KTST{}, and assume $\widetilde{G} \subset \Aut_s(X)$ as a subgroup. Put $L = \Homology^2(X, \mathbb{Z})$. From \cite{HM}, we know what $L^{\widetilde{G}}$ and $L_{\widetilde{G}}$ might look like; $L_{\widetilde{G}}$ is fixed if we know $\widetilde{G}$, but there might be multiple options for $L^{\widetilde{G}}$. Nevertheless, they are all known. So let us fix one of them. In particular, we always have $\rk L^{\widetilde{G}} = 3$, $\rk L_{\widetilde{G}} = 20$ (cf. \cite{HM}, Table 9), and $L_{\widetilde{G}}$ contains no wall divisors (cf. \cite{HM}, Theorem 8.3). We also have the anti-embedding $\gamma \colon D_{L_{\widetilde{G}}} \rightarrow D_{L^{\widetilde{G}}}$ as in Lemma \ref{monomorphism}.

Let us assume moreover that there exists $\hat{f} \in \Aut(X) \setminus \widetilde{G}$ such that $G$, the group generated by $\hat{f}$ and $\widetilde{G}$, is finite. By Lemma \ref{trace_lemma} and its proof, $f = \hat{f}_{|L^{\widetilde{G}}}$ is \fancyname{}, and by knowing how it acts on $L^{\widetilde{G}}$, we know the transcendental lattice as well as the primitive ample class fixed by $G$ (there is necessarily exactly one such class).
\subsection{The construction}
Let $L = L_{\text{K}3^{[2]}}$ (treated as an abstract lattice). Our procedure is going to be as follows. We fix $\widetilde{G}$, one of the 15 maximal groups and consider its action on  $L$ such that $L^{\widetilde{G}}$ and $L_{\widetilde{G}}$ are as in \cite{HM}. Consider abstract lattices $M$ isomorphic to  $L_{\widetilde{G}}$ and $N$ isomorphic to $L^{\widetilde{G}}$. Let us fix a \fancyname{} isometry $f$ in $\Isometries(N)$.

We want to check whether the orthogonal sum $N \oplus M$ can be embedded in $L$ in such a way that $f$ extends to an isometry of $L$. We do so by considering all anti-embeddings $\gamma \colon D_M \rightarrow D_N$ such that $\gamma$ is the gluing morphism for some embedding $N \oplus M \hookrightarrow L$ and checking the conditions from Lemma \ref{extension_condition}. The computational details of that can be found in Appendix \ref{main_code}.

If $f$ can be extended to an isometry of $L$, we call this isometry $\widetilde{f}$. Define $G$ as the group generated by $\widetilde{G}$ and $\widetilde{f}$ in $\Isometries(L)$. Because $f$ was good, $G$ fixes a unique (up to sign) primitive vector $H$ in $L$. Let $T$ be the orthogonal complement of $H$ in $N$. Now we can show the existence of a polarized manifold $(X, h)$ with a polarized action of $G$. 

Assuming we have a polarized \KTST{} $(X, h)$ and a marking $\eta \colon X \rightarrow L$ such that $\eta^{-1}(H) = h$ and $\eta^{-1}(T) = \T_X$, then through $\eta$ we have an action of $G$ on $\Homology^2(X, \mathbb{Z})$. By construction, this action fixes $h$ as the action of $G$ on $L$ fixed $H$. $\widetilde{G}$ fixes the entire $\T_X$, so in particular the action linearly extended to $\Homology^2(X, \mathbb{C})$ fixes the symplectic form $\sigma_X$; $\hat{f}$ extended to $\Homology^2(X, \mathbb{C})$ fixes $\mathbb{C}\sigma_X$ by Lemma \ref{trace_lemma} since $f$ is \fancyname{}. So the entire $G$ acts faithfully on $X$ by Theorem \ref{Torelli_morphisms}. 

But we can find such $X$ by Theorem \ref{Torelli_period} 
as $M \equiv L_{\widetilde{G}}$ which is the orthogonal complement of $h$ and $T$ in $L$ contains no wall divisors (as mentioned before cf. \cite{HM}, Theorem 8.3).

The results of the procedure applied over all the groups and their isomorphisms are in the following table. Each row describes a set of invariants of a triple $(X, h, G)$ where $(X, h)$ is a polarized \KTST{}, $G$ is a subgroup of $\Aut(X)$ fixing $h$ (hence $G$ is finite by Corollary \ref{finite_group}) such that $\widetilde{G} = G \cap \Aut_s(X)$ is one of the 15 maximal groups of symplectic automorphisms acting on the \KTSTs{}. The integer $h^2$ is the usual \BB{} degree of $h$, div denotes the divisibility of $h$.
The integer $m$ is the order of the cyclic group $G/\widetilde{G}$, $\T_X$ is the transcendental lattice of $X$. Column K3 tells if $X$ is birational to $S^{[2]}$ for some $S$ a K3 surface ("-" it is apparently not known yet, "f" means false)\footnote{We determine this by finding a vector in $\T_X$ which is of divisibility 2 in $\Homology^2(X, \mathbb{Z}$), see more in Appendix \ref{main_code}}. $L^{\widetilde{G}}$ is the invariant sublattice of the lattice $\Homology^2(X, \mathbb{Z})$ for the action of $\widetilde{G}$. "Ex" gives a reference to an example of $(X, h, G)$ in Section \ref{examples} if one is known.

Let us remark that the groups are maximal in the following sense: there exists a polarized \KTST{} $(X,h)$ such that $G$ is the group of all the $h$-polarized automorphisms on $X$. In particular, there are examples of two rows where all the invariants in the table match and a group $G$ from one row can be realized as a subgroup of a group $G'$ from another, but there are triples $(X, h, G)$ and $(X', h', G')$ such that an action of $G$ cannot be extended to an $h$-polarized action of $G'$. Note also that the triples $(X, h, G)$ do not necessarily contain one isomorphism class, i.~e. there can be two triples $(X, h, G)$ and $(X', h', G)$ such that $(X, h)$ and $(X', h')$ are not isomorphic. The notation for groups is the same as in \cite{HM} and \cite{Wilson1985ATLASOF}.

\footnotesize
\begin{center}
    \begin{longtable}{ |c|c|c|c|c|c|c|c|c|}
    \hline\hline
    
    \# & $h^2$ & div & $\widetilde{G}$ & $m$ & $\text{T}_X$ & K3 & $L^{\widetilde{G}}$ & Ex \\ 
    
    \hline\hline
    
    \rownumber. & 2 & 1 & $L_2(11)$ & 2 & $\begin{pmatrix}
    22 & 0 \\
    0 & 22
    \end{pmatrix}$
     & f & $\begin{pmatrix}
    2 & 1 & 0 \\
    1 & 6 & 0 \\
    0 & 0 & 22
    \end{pmatrix}$
     &  \ref{EPW_L2(11)}  \\ 
    \hline
    
    \rownumber. & 2 & 1 & $L_3(4)$ & 2 & $\begin{pmatrix}
    10 & 4 \\
    4 & 10
    \end{pmatrix}$
     &  -  & $\begin{pmatrix}
    2 & 0 & 0 \\
    0 & 10 & 4 \\
    0 & 4 & 10
    \end{pmatrix}$
     &  - \\ 
    \hline
    
    \rownumber. & 2 & 1 & $A_7$ & 2 & $\begin{pmatrix}
    6 & 0 \\
    0 & 70
    \end{pmatrix}$
     & f & $\begin{pmatrix}
    2 & 1 & 0 \\
    1 & 2 & 0 \\
    0 & 0 & 70
    \end{pmatrix}$
     &  \ref{EPW_A7}\footnote{Actually we do not know whether our examples correspond to this row, the one below, or one per each.} \\ 
    \hline
    
    \rownumber. & 2 & 1 & $A_7$ & 2 & $\begin{pmatrix}
    6 & 0 \\
    0 & 70
    \end{pmatrix}$
     & f & $\begin{pmatrix}
    2 & 0 & 1 \\
    0 & 6 & 0 \\
    1 & 0 & 18
    \end{pmatrix}$
     &  \ref{EPW_A7}\footnote{See above.} \\ 
    \hline
    
    \rownumber. & 2 & 1 & $\mathbb{Z}_2 \times L_2(7)$ & 2 & $\begin{pmatrix}
    14 & 0 \\
    0 & 14
    \end{pmatrix}$
     &  -  & $\begin{pmatrix}
    2 & 0 & 0 \\
    0 & 14 & 0 \\
    0 & 0 & 14
    \end{pmatrix}$
     &  - \\ 
    \hline
    
    \rownumber. & 2 & 1 & $\mathbb{Z}_2 \times L_2(7)$ & 4 & $\begin{pmatrix}
    14 & 0 \\
    0 & 14
    \end{pmatrix}$
     &  -  & $\begin{pmatrix}
    2 & 0 & 0 \\
    0 & 14 & 0 \\
    0 & 0 & 14
    \end{pmatrix}$
     &  \ref{2_2L2(7)} \\ 
    \hline
    
    \rownumber. & 2 & 1 & $\mathbb{Z}_2:A_6$ & 2 & $\begin{pmatrix}
    4 & 0 \\
    0 & 24
    \end{pmatrix}$
     &  -  & $\begin{pmatrix}
    2 & 0 & 0 \\
    0 & 4 & 0 \\
    0 & 0 & 24
    \end{pmatrix}$
     &  - \\ 
    \hline
    
    \rownumber. & 2 & 1 & $\mathbb{Z}_2^4:S_5$ & 2 & $\begin{pmatrix}
    4 & 0 \\
    0 & 40
    \end{pmatrix}$
     &  -  & $\begin{pmatrix}
    2 & 0 & 0 \\
    0 & 4 & 0 \\
    0 & 0 & 40
    \end{pmatrix}$
     &  \ref{2_Z5_S5}  \\ 
    \hline
    
    \rownumber. & 2 & 1 & $M_{10}$ & 2 & $\begin{pmatrix}
    4 & 0 \\
    0 & 30
    \end{pmatrix}$
     &  -  & $\begin{pmatrix}
    2 & 0 & 0 \\
    0 & 4 & 0 \\
    0 & 0 & 30
    \end{pmatrix}$
     &  - \\ 
    \hline
    
    \rownumber. & 2 & 1 & $M_{10}$ & 2 & $\begin{pmatrix}
    4 & 0 \\
    0 & 30
    \end{pmatrix}$
     & f & $\begin{pmatrix}
    2 & 0 & 0 \\
    0 & 4 & 0 \\
    0 & 0 & 30
    \end{pmatrix}$
     &  - \\ 
    \hline
    
    \rownumber. & 4 & 1 & $L_3(4)$ & 2 & $\begin{pmatrix}
    12 & 0 \\
    0 & 14
    \end{pmatrix}$
     & f & $\begin{pmatrix}
    4 & 2 & 0 \\
    2 & 4 & 0 \\
    0 & 0 & 14
    \end{pmatrix}$
     &  - \\ 
    \hline
    
    \rownumber. & 4 & 1 & $\mathbb{Z}_2^3:L_2(7)$ & 2 & $\begin{pmatrix}
    6 & 2 \\
    2 & 10
    \end{pmatrix}$
     &  -  & $\begin{pmatrix}
    4 & 0 & 0 \\
    0 & 6 & 2 \\
    0 & 2 & 10
    \end{pmatrix}$
     &  - \\ 
    \hline
    
    \rownumber. & 4 & 1 & $\mathbb{Z}_2 \times L_2(7)$ & 2 & $\begin{pmatrix}
    14 & 0 \\
    0 & 28
    \end{pmatrix}$
     & f & $\begin{pmatrix}
    4 & 2 & 0 \\
    2 & 8 & 0 \\
    0 & 0 & 14
    \end{pmatrix}$
     &  - \\ 
    \hline
    
    \rownumber. & 4 & 1 & $\mathbb{Z}_2:A_6$ & 2 & $\begin{pmatrix}
    2 & 0 \\
    0 & 24
    \end{pmatrix}$
     &  -  & $\begin{pmatrix}
    2 & 0 & 0 \\
    0 & 4 & 0 \\
    0 & 0 & 24
    \end{pmatrix}$
     &  - \\ 
    \hline
    
    \rownumber. & 4 & 1 & $\mathbb{Z}_2:A_6$ & 2 & $\begin{pmatrix}
    6 & 0 \\
    0 & 8
    \end{pmatrix}$
     & f & $\begin{pmatrix}
    4 & 0 & 0 \\
    0 & 6 & 0 \\
    0 & 0 & 8
    \end{pmatrix}$
     &  - \\ 
    \hline
    
    \rownumber. & 4 & 1 & $\mathbb{Z}_2^4:S_5$ & 2 & $\begin{pmatrix}
    2 & 0 \\
    0 & 40
    \end{pmatrix}$
     &  -  & $\begin{pmatrix}
    2 & 0 & 0 \\
    0 & 4 & 0 \\
    0 & 0 & 40
    \end{pmatrix}$
     &  - \\ 
    \hline
    
    \rownumber. & 4 & 1 & $\mathbb{Z}_2^4:S_5$ & 2 & $\begin{pmatrix}
    8 & 0 \\
    0 & 10
    \end{pmatrix}$
     &  -  & $\begin{pmatrix}
    4 & 0 & 0 \\
    0 & 8 & 0 \\
    0 & 0 & 10
    \end{pmatrix}$
     &  - \\ 
    \hline
    
    \rownumber. & 4 & 1 & $S_6$ & 2 & $\begin{pmatrix}
    12 & 0 \\
    0 & 30
    \end{pmatrix}$
     & f & $\begin{pmatrix}
    4 & 2 & 0 \\
    2 & 4 & 0 \\
    0 & 0 & 30
    \end{pmatrix}$
     &  - \\ 
    \hline
    
    \rownumber. & 4 & 1 & $M_{10}$ & 2 & $\begin{pmatrix}
    2 & 0 \\
    0 & 30
    \end{pmatrix}$
     &  -  & $\begin{pmatrix}
    2 & 0 & 0 \\
    0 & 4 & 0 \\
    0 & 0 & 30
    \end{pmatrix}$
     &  - \\ 
    \hline
    
    \rownumber. & 4 & 1 & $M_{10}$ & 2 & $\begin{pmatrix}
    2 & 0 \\
    0 & 30
    \end{pmatrix}$
     & f & $\begin{pmatrix}
    2 & 0 & 0 \\
    0 & 4 & 0 \\
    0 & 0 & 30
    \end{pmatrix}$
     &  - \\ 
    \hline
    
    \rownumber. & 4 & 1 & $\mathbb{Z}_2^4:(S_3 \times S_3)$ & 2 & $\begin{pmatrix}
    6 & 0 \\
    0 & 24
    \end{pmatrix}$
     & f & $\begin{pmatrix}
    4 & 0 & 0 \\
    0 & 6 & 0 \\
    0 & 0 & 24
    \end{pmatrix}$
     &  - \\ 
    \hline
    
    \rownumber. & 6 & 1 & $A_7$ & 2 & $\begin{pmatrix}
    2 & 0 \\
    0 & 70
    \end{pmatrix}$
     & f & $\begin{pmatrix}
    2 & 1 & 0 \\
    1 & 2 & 0 \\
    0 & 0 & 70
    \end{pmatrix}$
     &  - \\ 
    \hline
    
    \rownumber. & 6 & 1 & $A_7$ & 2 & $\begin{pmatrix}
    8 & 2 \\
    2 & 18
    \end{pmatrix}$
     & f & $\begin{pmatrix}
    6 & 3 & 1 \\
    3 & 6 & 1 \\
    1 & 1 & 8
    \end{pmatrix}$
     &  - \\ 
    \hline
    
    \rownumber. & 6 & 1 & $\mathbb{Z}_2:A_6$ & 2 & $\begin{pmatrix}
    4 & 0 \\
    0 & 8
    \end{pmatrix}$
     &  -  & $\begin{pmatrix}
    4 & 0 & 0 \\
    0 & 6 & 0 \\
    0 & 0 & 8
    \end{pmatrix}$
     &  - \\ 
    \hline
    
    \rownumber. & 6 & 1 & $(\mathbb{Z}_3 \times A_5):\mathbb{Z}_2$ & 2 & $\begin{pmatrix}
    10 & 0 \\
    0 & 30
    \end{pmatrix}$
     & f & $\begin{pmatrix}
    4 & 1 & 0 \\
    1 & 4 & 0 \\
    0 & 0 & 30
    \end{pmatrix}$
     &  - \\ 
    \hline
    
    \rownumber. & 6 & 1 & $\mathbb{Z}_2^4:(S_3 \times S_3)$ & 2 & $\begin{pmatrix}
    4 & 0 \\
    0 & 24
    \end{pmatrix}$
     &  -  & $\begin{pmatrix}
    4 & 0 & 0 \\
    0 & 6 & 0 \\
    0 & 0 & 24
    \end{pmatrix}$
     &  - \\ 
    \hline
    
    \rownumber. & 6 & 1 & $3^{1+4} : 2.2^2$ & 2 & $\begin{pmatrix}
    6 & 0 \\
    0 & 6
    \end{pmatrix}$
     & f & $\begin{pmatrix}
    6 & 0 & 0 \\
    0 & 6 & 0 \\
    0 & 0 & 6
    \end{pmatrix}$
     &  - \\ 
    \hline
    
    \rownumber. & 6 & 1 & $3^4 : A_6$ & 2 & $\begin{pmatrix}
    6 & 0 \\
    0 & 18
    \end{pmatrix}$
     & f & $\begin{pmatrix}
    6 & 3 & 0 \\
    3 & 6 & 0 \\
    0 & 0 & 6
    \end{pmatrix}$
     &  - \\ 
    \hline
    
    \rownumber. & 6 & 2 & $L_2(11)$ & 3 & $\begin{pmatrix}
    22 & 11 \\
    11 & 22
    \end{pmatrix}$
     &  -  & $\begin{pmatrix}
    6 & 2 & 2 \\
    2 & 8 & -3 \\
    2 & -3 & 8
    \end{pmatrix}$
     &  \ref{6d2_L2(11)} \\ 
    \hline
    
    \rownumber. & 6 & 2 & $A_7$ & 2 & $\begin{pmatrix}
    2 & 1 \\
    1 & 18
    \end{pmatrix}$
     &  -  & $\begin{pmatrix}
    2 & 0 & 1 \\
    0 & 6 & 0 \\
    1 & 0 & 18
    \end{pmatrix}$
     &  \ref{6d2_A_7} \\ 
    \hline
    
    \rownumber. & 6 & 2 & $(\mathbb{Z}_3 \times A_5):\mathbb{Z}_2$ & 2 & $\begin{pmatrix}
    10 & 5 \\
    5 & 10
    \end{pmatrix}$
     &  -  & $\begin{pmatrix}
    6 & 0 & 0 \\
    0 & 10 & 5 \\
    0 & 5 & 10
    \end{pmatrix}$
     &  - \\ 
    \hline
    
    \rownumber. & 6 & 2 & $(\mathbb{Z}_3 \times A_5):\mathbb{Z}_2$ & 6 & $\begin{pmatrix}
    10 & 5 \\
    5 & 10
    \end{pmatrix}$
     &  -  & $\begin{pmatrix}
    6 & 0 & 0 \\
    0 & 10 & 5 \\
    0 & 5 & 10
    \end{pmatrix}$
     &  \ref{6d2_Z3A_5:Z2} \\ 
    \hline
    
    \rownumber. & 6 & 2 & $3^{1+4} : 2.2^2$ & 2 & $\begin{pmatrix}
    6 & 0 \\
    0 & 6
    \end{pmatrix}$
     &  -  & $\begin{pmatrix}
    6 & 0 & 0 \\
    0 & 6 & 0 \\
    0 & 0 & 6
    \end{pmatrix}$
     &  - \\ 
    \hline
    
    \rownumber. & 6 & 2 & $3^{1+4} : 2.2^2$ & 4 & $\begin{pmatrix}
    6 & 0 \\
    0 & 6
    \end{pmatrix}$
     &  -  & $\begin{pmatrix}
    6 & 0 & 0 \\
    0 & 6 & 0 \\
    0 & 0 & 6
    \end{pmatrix}$
     &   \ref{6d2_digits} \\ 
    \hline
    
    \rownumber. & 6 & 2 & $3^4 : A_6$ & 3 & $\begin{pmatrix}
    6 & 3 \\
    3 & 6
    \end{pmatrix}$
     &  -  & $\begin{pmatrix}
    6 & 3 & 0 \\
    3 & 6 & 0 \\
    0 & 0 & 6
    \end{pmatrix}$
     &  - \\ 
    \hline
    
    \rownumber. & 6 & 2 & $3^4 : A_6$ & 2 & $\begin{pmatrix}
    6 & 3 \\
    3 & 6
    \end{pmatrix}$
     &  -  & $\begin{pmatrix}
    6 & 3 & 0 \\
    3 & 6 & 0 \\
    0 & 0 & 6
    \end{pmatrix}$
     &  - \\ 
    \hline
    
    \rownumber. & 6 & 2 & $3^4 : A_6$ & 6 & $\begin{pmatrix}
    6 & 3 \\
    3 & 6
    \end{pmatrix}$
     &  -  & $\begin{pmatrix}
    6 & 3 & 0 \\
    3 & 6 & 0 \\
    0 & 0 & 6
    \end{pmatrix}$
     &  \ref{6d2_3^4:A_6} \\ 
    \hline
    
    \rownumber. & 8 & 1 & $\mathbb{Z}_2:A_6$ & 2 & $\begin{pmatrix}
    4 & 0 \\
    0 & 6
    \end{pmatrix}$
     &  -  & $\begin{pmatrix}
    4 & 0 & 0 \\
    0 & 6 & 0 \\
    0 & 0 & 8
    \end{pmatrix}$
     &  - \\ 
    \hline
    
    \rownumber. & 8 & 1 & $\mathbb{Z}_2^4:S_5$ & 2 & $\begin{pmatrix}
    4 & 0 \\
    0 & 10
    \end{pmatrix}$
     & f & $\begin{pmatrix}
    4 & 0 & 0 \\
    0 & 8 & 0 \\
    0 & 0 & 10
    \end{pmatrix}$
     &  - \\ 
    \hline
    
    \rownumber. & 10 & 1 & $\mathbb{Z}_2^4:S_5$ & 2 & $\begin{pmatrix}
    4 & 0 \\
    0 & 8
    \end{pmatrix}$
     &  -  & $\begin{pmatrix}
    4 & 0 & 0 \\
    0 & 8 & 0 \\
    0 & 0 & 10
    \end{pmatrix}$
     &  - \\ 
    \hline
    
    \rownumber. & 10 & 1 & $(\mathbb{Z}_3 \times A_5):\mathbb{Z}_2$ & 2 & $\begin{pmatrix}
    6 & 0 \\
    0 & 30
    \end{pmatrix}$
     & f & $\begin{pmatrix}
    4 & 1 & 0 \\
    1 & 4 & 0 \\
    0 & 0 & 30
    \end{pmatrix}$
     &  - \\ 
    \hline
    
    \rownumber. & 10 & 1 & $(\mathbb{Z}_3 \times A_5):\mathbb{Z}_2$ & 2 & $\begin{pmatrix}
    6 & 0 \\
    0 & 30
    \end{pmatrix}$
     & f & $\begin{pmatrix}
    6 & 0 & 0 \\
    0 & 10 & 5 \\
    0 & 5 & 10
    \end{pmatrix}$
     &  - \\ 
    \hline
    
    \rownumber. & 12 & 1 & $L_3(4)$ & 2 & $\begin{pmatrix}
    2 & 0 \\
    0 & 28
    \end{pmatrix}$
     & f & $\begin{pmatrix}
    2 & 0 & 0 \\
    0 & 10 & 4 \\
    0 & 4 & 10
    \end{pmatrix}$
     &  - \\ 
    \hline
    
    \rownumber. & 12 & 1 & $L_3(4)$ & 2 & $\begin{pmatrix}
    4 & 0 \\
    0 & 14
    \end{pmatrix}$
     & f & $\begin{pmatrix}
    4 & 2 & 0 \\
    2 & 4 & 0 \\
    0 & 0 & 14
    \end{pmatrix}$
     &  - \\ 
    \hline
    
    \rownumber. & 12 & 1 & $S_6$ & 2 & $\begin{pmatrix}
    4 & 0 \\
    0 & 30
    \end{pmatrix}$
     & f & $\begin{pmatrix}
    4 & 2 & 0 \\
    2 & 4 & 0 \\
    0 & 0 & 30
    \end{pmatrix}$
     &  - \\ 
    \hline
    
    \rownumber. & 12 & 1 & $M_{10}$ & 2 & $\begin{pmatrix}
    4 & 2 \\
    2 & 6
    \end{pmatrix}$
     & f & $\begin{pmatrix}
    4 & 2 & 0 \\
    2 & 6 & 0 \\
    0 & 0 & 12
    \end{pmatrix}$
     &  - \\ 
    \hline
    
    \rownumber. & 12 & 1 & $\mathbb{Z}_3^2:QD_{16}$ & 2 & $\begin{pmatrix}
    4 & 2 \\
    2 & 10
    \end{pmatrix}$
     &  -  & $\begin{pmatrix}
    4 & 2 & 0 \\
    2 & 10 & 0 \\
    0 & 0 & 12
    \end{pmatrix}$
     &  - \\ 
    \hline
    
    \rownumber. & 12 & 1 & $3^{1+4} : 2.2^2$ & 2 & $\begin{pmatrix}
    6 & 0 \\
    0 & 12
    \end{pmatrix}$
     & f & $\begin{pmatrix}
    6 & 0 & 0 \\
    0 & 6 & 0 \\
    0 & 0 & 6
    \end{pmatrix}$
     &  - \\ 
    \hline
    
    \rownumber. & 14 & 1 & $\mathbb{Z}_2 \times L_2(7)$ & 2 & $\begin{pmatrix}
    2 & 0 \\
    0 & 14
    \end{pmatrix}$
     &  -  & $\begin{pmatrix}
    2 & 0 & 0 \\
    0 & 14 & 0 \\
    0 & 0 & 14
    \end{pmatrix}$
     &  - \\ 
    \hline
    
    \rownumber. & 14 & 2 & $L_3(4)$ & 6 & $\begin{pmatrix}
    4 & 2 \\
    2 & 4
    \end{pmatrix}$
     &  -  & $\begin{pmatrix}
    4 & 2 & 0 \\
    2 & 4 & 0 \\
    0 & 0 & 14
    \end{pmatrix}$
     &  - \\ 
    \hline
    
    \rownumber. & 14 & 2 & $L_3(4)$ & 2 & $\begin{pmatrix}
    4 & 2 \\
    2 & 4
    \end{pmatrix}$
     &  -  & $\begin{pmatrix}
    4 & 2 & 0 \\
    2 & 4 & 0 \\
    0 & 0 & 14
    \end{pmatrix}$
     &  - \\ 
    \hline
    
    \rownumber. & 14 & 2 & $\mathbb{Z}_2 \times L_2(7)$ & 2 & $\begin{pmatrix}
    4 & 2 \\
    2 & 8
    \end{pmatrix}$
     &  -  & $\begin{pmatrix}
    4 & 2 & 0 \\
    2 & 8 & 0 \\
    0 & 0 & 14
    \end{pmatrix}$
     &  - \\ 
    \hline
    
    \rownumber. & 16 & 1 & $Q(\mathbb{Z}_3^2:\mathbb{Z}_2)$ & 2 & $\begin{pmatrix}
    8 & 4 \\
    4 & 14
    \end{pmatrix}$
     & f & $\begin{pmatrix}
    6 & 2 & 2 \\
    2 & 6 & -2 \\
    2 & -2 & 14
    \end{pmatrix}$
     &  - \\ 
    \hline
    
    \rownumber. & 18 & 1 & $3^4 : A_6$ & 2 & $\begin{pmatrix}
    6 & 0 \\
    0 & 6
    \end{pmatrix}$
     & f & $\begin{pmatrix}
    6 & 3 & 0 \\
    3 & 6 & 0 \\
    0 & 0 & 6
    \end{pmatrix}$
     &  - \\ 
    \hline
    
    \rownumber. & 22 & 1 & $L_2(11)$ & 2 & $\begin{pmatrix}
    2 & 0 \\
    0 & 22
    \end{pmatrix}$
     & f & $\begin{pmatrix}
    2 & 1 & 0 \\
    1 & 6 & 0 \\
    0 & 0 & 22
    \end{pmatrix}$
     &  - \\ 
    \hline
    
    \rownumber. & 22 & 1 & $L_2(11)$ & 2 & $\begin{pmatrix}
    6 & 2 \\
    2 & 8
    \end{pmatrix}$
     & f & $\begin{pmatrix}
    6 & 2 & 2 \\
    2 & 8 & -3 \\
    2 & -3 & 8
    \end{pmatrix}$
     &  - \\ 
    \hline
    
    \rownumber. & 22 & 2 & $L_2(11)$ & 2 & $\begin{pmatrix}
    2 & 1 \\
    1 & 6
    \end{pmatrix}$
     &  -  & $\begin{pmatrix}
    2 & 1 & 0 \\
    1 & 6 & 0 \\
    0 & 0 & 22
    \end{pmatrix}$
     &  \ref{22d2_L2(11)} \\ 
    \hline
    
    \rownumber. & 24 & 1 & $\mathbb{Z}_2:A_6$ & 2 & $\begin{pmatrix}
    2 & 0 \\
    0 & 4
    \end{pmatrix}$
     &  -  & $\begin{pmatrix}
    2 & 0 & 0 \\
    0 & 4 & 0 \\
    0 & 0 & 24
    \end{pmatrix}$
     &  - \\ 
    \hline
    
    \rownumber. & 24 & 1 & $\mathbb{Z}_2^4:(S_3 \times S_3)$ & 2 & $\begin{pmatrix}
    4 & 0 \\
    0 & 6
    \end{pmatrix}$
     &  -  & $\begin{pmatrix}
    4 & 0 & 0 \\
    0 & 6 & 0 \\
    0 & 0 & 24
    \end{pmatrix}$
     &  - \\ 
    \hline
    
    \rownumber. & 28 & 1 & $L_3(4)$ & 2 & $\begin{pmatrix}
    2 & 0 \\
    0 & 12
    \end{pmatrix}$
     & f & $\begin{pmatrix}
    2 & 0 & 0 \\
    0 & 10 & 4 \\
    0 & 4 & 10
    \end{pmatrix}$
     &  - \\ 
    \hline
    
    \rownumber. & 28 & 1 & $\mathbb{Z}_2 \times L_2(7)$ & 2 & $\begin{pmatrix}
    2 & 0 \\
    0 & 28
    \end{pmatrix}$
     & f & $\begin{pmatrix}
    2 & 0 & 0 \\
    0 & 14 & 0 \\
    0 & 0 & 14
    \end{pmatrix}$
     &  - \\ 
    \hline
    
    \rownumber. & 28 & 1 & $\mathbb{Z}_2 \times L_2(7)$ & 2 & $\begin{pmatrix}
    4 & 0 \\
    0 & 14
    \end{pmatrix}$
     & f & $\begin{pmatrix}
    4 & 2 & 0 \\
    2 & 8 & 0 \\
    0 & 0 & 14
    \end{pmatrix}$
     &  - \\ 
    \hline
    
    \rownumber. & 30 & 1 & $M_{10}$ & 2 & $\begin{pmatrix}
    2 & 0 \\
    0 & 4
    \end{pmatrix}$
     & f & $\begin{pmatrix}
    2 & 0 & 0 \\
    0 & 4 & 0 \\
    0 & 0 & 30
    \end{pmatrix}$
     &  - \\ 
    \hline
    
    \rownumber. & 30 & 1 & $(\mathbb{Z}_3 \times A_5):\mathbb{Z}_2$ & 2 & $\begin{pmatrix}
    6 & 0 \\
    0 & 10
    \end{pmatrix}$
     & f & $\begin{pmatrix}
    6 & 0 & 0 \\
    0 & 10 & 5 \\
    0 & 5 & 10
    \end{pmatrix}$
     &  - \\ 
    \hline
    
    \rownumber. & 30 & 2 & $S_6$ & 2 & $\begin{pmatrix}
    4 & 2 \\
    2 & 4
    \end{pmatrix}$
     &  -  & $\begin{pmatrix}
    4 & 2 & 0 \\
    2 & 4 & 0 \\
    0 & 0 & 30
    \end{pmatrix}$
     &  - \\ 
    \hline
    
    \rownumber. & 30 & 2 & $M_{10}$ & 2 & $\begin{pmatrix}
    2 & 0 \\
    0 & 4
    \end{pmatrix}$
     &  -  & $\begin{pmatrix}
    2 & 0 & 0 \\
    0 & 4 & 0 \\
    0 & 0 & 30
    \end{pmatrix}$
     &  - \\ 
    \hline
    
    \rownumber. & 30 & 2 & $(\mathbb{Z}_3 \times A_5):\mathbb{Z}_2$ & 2 & $\begin{pmatrix}
    4 & 1 \\
    1 & 4
    \end{pmatrix}$
     &  -  & $\begin{pmatrix}
    4 & 1 & 0 \\
    1 & 4 & 0 \\
    0 & 0 & 30
    \end{pmatrix}$
     &  - \\ 
    \hline
    
    \rownumber. & 40 & 1 & $\mathbb{Z}_2^4:S_5$ & 2 & $\begin{pmatrix}
    2 & 0 \\
    0 & 4
    \end{pmatrix}$
     &  -  & $\begin{pmatrix}
    2 & 0 & 0 \\
    0 & 4 & 0 \\
    0 & 0 & 40
    \end{pmatrix}$
     &  - \\ 
    \hline
    
    \rownumber. & 42 & 1 & $A_7$ & 2 & $\begin{pmatrix}
    4 & 2 \\
    2 & 6
    \end{pmatrix}$
     & f & $\begin{pmatrix}
    4 & 2 & 1 \\
    2 & 6 & 3 \\
    1 & 3 & 12
    \end{pmatrix}$
     &  - \\ 
    \hline
    
    \rownumber. & 70 & 1 & $A_7$ & 2 & $\begin{pmatrix}
    2 & 0 \\
    0 & 6
    \end{pmatrix}$
     & f & $\begin{pmatrix}
    2 & 0 & 1 \\
    0 & 6 & 0 \\
    1 & 0 & 18
    \end{pmatrix}$
     &  - \\ 
    \hline
    
    \rownumber. & 70 & 2 & $A_7$ & 6 & $\begin{pmatrix}
    2 & 1 \\
    1 & 2
    \end{pmatrix}$
     &  -  & $\begin{pmatrix}
    2 & 1 & 0 \\
    1 & 2 & 0 \\
    0 & 0 & 70
    \end{pmatrix}$
     &  - \\ 
    \hline
    
    \rownumber. & 70 & 2 & $A_7$ & 2 & $\begin{pmatrix}
    2 & 1 \\
    1 & 2
    \end{pmatrix}$
     &  -  & $\begin{pmatrix}
    2 & 1 & 0 \\
    1 & 2 & 0 \\
    0 & 0 & 70
    \end{pmatrix}$
     &  - \\ 
    \hline
    
    \caption{List of group actions}
    \label{table:1}
    \end{longtable}
\end{center}

\normalsize

\section{Known examples of very symmetric fourfolds}\label{examples}

\subsection{Fano varieties of lines}

Let $V \subset \mathbb{P}^5$ be a smooth cubic fourfold, let $\Grass(2, 6)$ be the Grassmannian of planes in a 6-space (isomorphically, projective lines in the projective 5-space). Let us consider the following subvariety:
\begin{align*}
    F(V) = \{L \in \Grass(2, 6)| L \subset V\},
\end{align*}
it is called the \textit{Fano variety (or scheme) of lines} of $X$. It is naturally embedded in $\mathbb{P}^{14}$ (via its embedding in $\Grass(2, 6)$). It turns out to be a \KTST{} for generic $V$. The Pl{\"u}cker line bundle on $\Grass(2, 6)$ restricted to $F(V)$ induces a polarization of \BB{} degree 6 on the lattice $\Homology^2(F(V), \mathbb{Z})$ of divisibility 2, let us denote it by $h$. 
In \cite{BD}, it was shown that a generic polarized manifold of K3$^{[2]}$-type with polarization of \BB{} degree 6 and divisibility 2 can be described this way. 

In this section we shall construct special cubic fourfolds $V$ such that $F(V)$ is a very symmetric \KTST{}. We find in this way geometric constructions of the examples for all the entries (rows) with $h$ of \BB{} degree $6$ and divisibility $2$ from the table (cf.~\cite[\S 8]{HM} \cite{Fu}, \cite[Ch.~4]{Mongardi}). 
In order to prove that a given $F(V)$ admits the required group of automorphisms we 
use the following observation from \cite[Thm.~0.1]{Fu}, and easily conclude the following (via case-by-case analysis). 

\begin{fact*}
A projective isomorphism $f$ of $\mathbb{P}^5$ of order 2, 3 (and therefore also 6), or 4 leaving a cubic fourfold $V$ invariant induces a symplectic automorphism on a Fano scheme $F(V)$ if and only if the matrix representing $f$, normalized so it has an eigenvalue 1, has determinant 1.
\end{fact*}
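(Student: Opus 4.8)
The plan is to follow \cite[Thm.~0.1]{Fu}: realize $\sigma_{F(V)}$ inside the cohomology of $V$, compute the action of $f$ there by a residue, and then extract the determinant condition. Write $V=\{F=0\}\subset\mathbb{P}^5=\mathbb{P}(W)$ with $\dim W=6$ and $F$ a smooth cubic form. By Beauville--Donagi \cite{BD} the incidence variety $P=\{(x,\ell)\in V\times F(V):x\in\ell\}$ induces an isomorphism of polarized Hodge structures between the primitive parts of $\Homology^4(V,\mathbb{Z})$ and $\Homology^2(F(V),\mathbb{Z})$, and in particular a canonical isomorphism $\Homology^{3,1}(V)\xrightarrow{\ \sim\ }\Homology^{2,0}(F(V))=\mathbb{C}\,\sigma_{F(V)}$. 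Since $P$ is cut out by the incidence condition alone, it is preserved by the diagonal action on $V\times F(V)$ of any $g\in\mathrm{PGL}(W)$, so this isomorphism is equivariant for the automorphisms induced by $g$. Hence $f$ induces a symplectic automorphism of $F(V)$ if and only if $f^{\ast}$ is the identity on the line $\Homology^{3,1}(V)$.

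Next I would compute that action by Griffiths' residue. One has $\Homology^{3,1}(V)=\mathbb{C}\cdot\mathrm{Res}_V(\Omega_0/F^2)$, where $\Omega_0=\sum_{i=0}^{5}(-1)^i x_i\,dx_0\wedge\cdots\wedge\widehat{dx_i}\wedge\cdots\wedge dx_5$ is the contraction of $dx_0\wedge\cdots\wedge dx_5$ by the Euler vector field. Fix a lift $A\in\mathrm{GL}(W)$ of $f$, and let $\lambda_A\in\mathbb{C}^{\ast}$ be determined by $A^{\ast}F=\lambda_A F$ (such a scalar exists because $A$ preserves the irreducible hypersurface $V$). Since $A^{\ast}\Omega_0=\det(A)\,\Omega_0$, we get $A^{\ast}(\Omega_0/F^2)=\det(A)\,\lambda_A^{-2}\,(\Omega_0/F^2)$, so $f^{\ast}$ acts on $\sigma_{F(V)}$ by multiplication by $\det(A)\,\lambda_A^{-2}$. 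Rescaling $A$ by $\mu\in\mathbb{C}^{\ast}$ multiplies $\det(A)$ by $\mu^6$ and $\lambda_A$ by $\mu^3$, so this scalar depends only on $f$. Therefore $f$ is symplectic on $F(V)$ if and only if $\det(A)=\lambda_A^{2}$ for one, equivalently any, lift $A$.

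It then remains to rewrite this in the stated form. If $f$ has order $6$, write $f^3$ (order $2$) and $f^2$ (order $3$); since $\gcd(2,3)=1$, $f$ is symplectic iff $f^2$ and $f^3$ both are, and for a lift $A$ of $f$ with an eigenvalue $1$ the powers $A^2,A^3$ are such lifts of $f^2,f^3$ with determinants $\det(A)^2,\det(A)^3$, so $\det A=1\Leftrightarrow\det(A^2)=\det(A^3)=1$ and the order-$6$ case reduces to orders $2$ and $3$. For $f$ of order $2$, $3$ or $4$ I would choose the lift $A$ inside the stabilizer of $F$, so $\lambda_A=1$, which turns the criterion into $\det(A)=1$; it must then be checked that such an $A$ can be taken of the same order as $f$ and with $1$ among its eigenvalues (this is the ``normalized so it has an eigenvalue $1$'' of the statement), and conversely that any such normalized lift has $\lambda_A^{2}=1$, so the criterion is unchanged. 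For order $2$ this is elementary; for order $4$ (and, with care, order $3$) it uses smoothness of $V$: diagonalizing $A=\mathrm{diag}(\chi_0,\dots,\chi_5)$ with some $\chi_i=1$, a short count of which monomials $x_jx_kx_l$ can have a given $\lambda$-weight shows that a wrong value of $\lambda_A$ would force $F$ into a power of the ideal of a coordinate linear subspace $\mathbb{P}(W_0)\subset\mathbb{P}^5$, hence $\mathbb{P}(W_0)\subseteq\mathrm{Sing}(V)$ --- impossible.

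I expect this last, case-by-case normalization step to be the main obstacle; the equivariance of the Beauville--Donagi isomorphism and the residue computation are routine once one is careful about signs. The order-$3$ (hence order-$6$) case deserves the most care: the monomial count does not by itself rule out $\lambda_A$ being a primitive cube root of unity --- this occurs precisely when the three eigenvalues of $A$ appear with equal multiplicity $2$ --- and there one falls back on the stabilizer-of-$F$ normalization, checking (for instance from the explicit list of admissible $\langle f\rangle$-actions, where the equal-multiplicity constraint pins down $\det A$) that the criterion still reads off correctly.
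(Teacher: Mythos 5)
The paper does not actually prove this Fact: it is quoted as an ``observation from [Fu, Thm.~0.1]'' to be checked ``via case-by-case analysis'', so there is no internal argument to compare with. Your proposal reconstructs the standard proof underlying Fu's theorem, and its core is correct: the Beauville--Donagi correspondence is preserved by any linear automorphism stabilizing $V$, so symplecticity on $F(V)$ is equivalent to triviality of $f^{*}$ on $\Homology^{3,1}(V)$; the Griffiths residue computation then gives the scalar $\det(A)\lambda_A^{-2}$, independent of the lift; hence the true criterion is $\det A=\lambda_A^{2}$.

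The gap is exactly where you suspect it, and your closing assertion that in the order-$3$ equal-multiplicity case ``the criterion still reads off correctly'' is false. Take $F=(x_0^2x_2+x_2^2x_4+x_4^2x_0)+(x_1^2x_3+x_3^2x_5+x_5^2x_1)$, which is smooth (the two cyclic plane cubics have no common projective singular point), and $A=\mathrm{diag}(1,1,\zeta,\zeta,\zeta^{2},\zeta^{2})$ with $\zeta$ a primitive cube root of unity. Then $A^{*}F=\zeta F$, every order-$3$ lift of $[A]$ has an eigenvalue $1$ and determinant $1$, yet $f^{*}$ acts on $\Homology^{3,1}(V)$ by $\det(A)\lambda_A^{-2}=\zeta\neq 1$, so the induced automorphism of $F(V)$ is nonsymplectic while the stated criterion predicts it is symplectic. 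The point is that ``determinant $1$ after normalizing an eigenvalue to $1$'' is equivalent to $\det A=\lambda_A^{2}$ only when the eigenvalue-$1$ lift can be arranged to satisfy $A^{*}F=F$; for orders $2$ and $4$ this follows from your smoothness/monomial-count argument (since cubing is a bijection on $\mu_2$ and $\mu_4$), but for orders $3$ and $6$ the class of $\lambda_A$ in $\mu_3$ is an invariant of $f$ that can be nontrivial. So either the Fact must be restated with the hypothesis $A^{*}F=F$ (which is how it is actually applied to the paper's examples, where the groups visibly fix the defining equations and the eigenvalue multiplicities are never $(2,2,2)$), or the order-$3$ and order-$6$ cases need an explicit exception. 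Apart from this last step, your write-up is correct.
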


For the duration of the section, $x_0,x_1,\ldots, x_5$ will be coordinates in $\mathbb{P}^5$.
    \begin{example}\label{6d2_3^4:A_6}
        Let $V$ be the \textit{Fermat's cubic} defined by the equation 
        \begin{align*}
            x_0^3 + x_1^3 + x_2^3 + x_3^3 + x_4^3 + x_5^3 = 0.
        \end{align*}
         One easily sees the action of $3^5 \colon S_6$ on this variety which consists of permutations of variables, and multiplications of variables by roots of unity of degree 3. The action induces an action by the same group on $F(V)$. 
        By the above fact, one can check that the subgroup $3^4 \colon A_6$ acts symplectically on $F(V)$. By comparing with the table, we conclude that
        \begin{align*}
             \T_{F(V)} \cong \begin{pmatrix} 6 & 3 \\ 3 & 6 \end{pmatrix}.
        \end{align*}
        \begin{proposition*}
            The above is cardinality-wise the biggest possible group of polarized automorphisms acting on a \KTST{}, said cardinality being 174960.
        \end{proposition*}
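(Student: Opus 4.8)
The plan is to show that among all triples $(X,h,G)$ classified in Table~\ref{table:1}, the row with $\widetilde{G} = 3^4\colon A_6$, $h^2 = 6$, $\di(h) = 2$ and $m = 6$ yields the group of maximal cardinality, and that this maximum equals $174960 = 6 \cdot |3^4\colon A_6| = 6 \cdot 81 \cdot 360$. First I would invoke the structure already established: by Corollary~\ref{finite_group}, a group of polarized automorphisms of a \KTST{} is finite, and by Proposition~\ref{empty_kernel} together with the exact sequence \eqref{the_exact_sequence}, such a $G$ fits into $1 \to \widetilde{G} \to G \to \mu_m \to 1$ with $\widetilde{G} = G \cap \Aut_s(X)$ a finite group of symplectic automorphisms. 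Hence $|G| = m \cdot |\widetilde{G}|$.

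The key reduction is that if $G$ is large then $\widetilde{G}$ must be large, and in particular, by the \cite{HM} classification, $|\widetilde{G}|$ is bounded by the order of the largest group of symplectic automorphisms, which is $|3^4\colon A_6| = 29160$; moreover any $\widetilde{G}$ realizing Picard rank $21$ (forced once $\widetilde{G}$ is one of the $15$ maximal groups, cf.\ the Introduction) leaves only the possibilities enumerated in Table~\ref{table:1}. So the proof amounts to: (1) recalling that $29160$ is the maximal order of a symplectic group (from \cite{HM}); (2) bounding $m$ — the point of Lemma~\ref{trace_lemma} is precisely that a \fancyname{} isometry of the rank-$3$ invariant lattice $L^{\widetilde{G}}$ has order $2$, $3$, $4$, or $6$, so $m \le 6$; (3) combining, $|G| \le 6 \cdot 29160 = 174960$; and (4) exhibiting that this bound is attained — which is exactly the content of Example~\ref{6d2_3^4:A_6} together with the row $m=6$, $\widetilde{G}=3^4\colon A_6$ in Table~\ref{table:1}, whose existence is guaranteed by the construction in Section~\ref{construction} (via Theorems~\ref{Torelli_morphisms} and \ref{Torelli_period}).

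One subtlety to address carefully: $m \le 6$ was shown in Lemma~\ref{trace_lemma} only for \fancyname{} isometries, i.e.\ for a single nonsymplectic automorphism $\hat f$ whose action on $L^{\widetilde{G}}$ has finite order; but $G/\widetilde{G}$ being cyclic of order $m$ means $G$ is generated over $\widetilde{G}$ by one element $\hat f$, and $\hat f$ acts on the rank-$3$ lattice $L^{\widetilde{G}}$ (the invariant lattice for $\widetilde{G}$, which $\hat f$ preserves since it normalizes $\widetilde{G}$) with order dividing $m$ times the order of the part of $\hat f$ acting trivially on $L^{\widetilde{G}}$ — and I would note that an element acting trivially on $L^{\widetilde{G}}$ and preserving $\sigma_X$ lies in the symplectic part, hence in $\widetilde{G}$, so the induced action of $\hat f$ on $L^{\widetilde{G}}$ genuinely has order exactly $m$. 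Then Lemma~\ref{trace_lemma} forces $m \in \{2,3,4,6\}$, giving $m \le 6$.

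The main obstacle I anticipate is the attainment step: one must be sure that the $m = 6$ extension for $\widetilde{G} = 3^4\colon A_6$ actually exists as an honest automorphism group, not merely as a lattice-theoretic extension. This is where the Fermat cubic of Example~\ref{6d2_3^4:A_6} is essential — the group $3^5\colon S_6$ acts on $V$, hence on $F(V)$, and one checks via the quoted fact from \cite[Thm.~0.1]{Fu} that the symplectic part is $3^4\colon A_6$ while the full polarized automorphism group has order $6$ times that, matching the table row. I would therefore structure the proof as: state $|G| = m|\widetilde{G}| \le 6 \cdot 29160 = 174960$ using the \cite{HM} bound and Lemma~\ref{trace_lemma}, then point to Example~\ref{6d2_3^4:A_6} (the Fano variety of lines on the Fermat cubic) for equality, concluding that $174960$ is both an upper bound and achieved, hence the maximum.
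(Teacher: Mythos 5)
There is a genuine gap in your upper-bound step. The proposition is a statement about \emph{every} group of polarized automorphisms of \emph{every} \KTST{}, so you may not restrict at the outset to the triples in Table \ref{table:1}, whose symplectic parts are by construction among the 15 maximal groups. For a general finite polarized $G$, the symplectic part $\widetilde{G} = G \cap \Aut_s(X)$ can be any group in the H\"ohn--Mason classification, and Lemma \ref{trace_lemma} --- your sole source for $m \le 6$ --- only applies under the hypothesis $\rk \Homology^2(X,\mathbb{Z})^{\widetilde{G}} = 3$, equivalently $\rk \Homology^2(X,\mathbb{Z})_{\widetilde{G}} = 20$. When the coinvariant lattice has rank below $20$, the invariant lattice has rank above $3$, the eigenvalue count in Lemma \ref{trace_lemma} breaks down, and $m$ can be much larger (up to $66$, by \cite[Corollary 7.1.5]{Mongardi}). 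Your estimate $|G| = m|\widetilde{G}| \le 6 \cdot 29160$ therefore pairs the maximum of $|\widetilde{G}|$ over \emph{all} symplectic groups with a bound on $m$ that is only valid for those $\widetilde{G}$ of coinvariant rank $20$; as written it does not exclude, say, a symplectic group of order a few thousand with coinvariant rank $19$ and $m$ in the teens, which would beat $174960$.

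The paper closes exactly this case: by \cite[Table 12]{HM} the largest $\widetilde{G}$ whose coinvariant lattice can have rank below $20$ has order $972$, and $66 \cdot 972 = 64152 < 174960$; for every larger $\widetilde{G}$ the coinvariant rank is forced to be $20$, Lemma \ref{trace_lemma} applies, and $|G| \le 6 \cdot 29160 = 174960$. Your treatment of the remaining points is sound and matches the paper: the observation that the induced action of $\hat f$ on $L^{\widetilde{G}}$ has order exactly $m$ (because a power acting trivially on $L^{\widetilde{G}}$ fixes $\sigma_X$ and hence lies in $\widetilde{G}$) is a correct and worthwhile justification of the hypothesis of Lemma \ref{trace_lemma}, and the attainment via the Fermat cubic of Example \ref{6d2_3^4:A_6} is the same as in the paper. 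To repair the proof, add the dichotomy on $\rk \Homology^2(X,\mathbb{Z})_{\widetilde{G}}$ and the $66 \cdot 972$ estimate.
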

        \begin{proof}
              This is because $3^4 \colon A_6$ is the biggest possible group that can act by symplectic automorphisms on a \KTST{} $X$ and $3^5 \colon S_6$ is 6 times bigger than it. By Lemma \ref{trace_lemma} a group of automorphisms $G$ on such $X$ extending a group of symplectic automorphisms $\widetilde{G}$ can be at most 6 times bigger than $\widetilde{G}$ as long as $\rk\Homology^2(X, \mathbb{Z})_{\widetilde{G}} = 20$. In general, one has $|G|/|\widetilde{G}| \le 66$ (cf. \cite[Corollary 7.1.5]{Mongardi}). But the biggest possible $\widetilde{G}$ for which the rank of the coinvariant lattice can drop below 20 has cardinality 972 (cf. \cite[Table 12]{HM}), and obviously $66 \cdot 972 < 174960$.
         \end{proof}
         It makes this manifold the most symmetric \KTST{} in a sense. 
    \end{example}

    \begin{example}\label{6d2_L2(11)}
         Let $V$ be the cubic fourfold defined by the equation  
        \begin{align*}
            x_0^3 + x_1^2x_2 + x_2^2x_3 + x_3^2x_4 + x_4^2x_5 + x_5^2x_1 = 0.
        \end{align*}
        Now let us consider the cubic threefold $\widetilde{V}$ given by the equation 
        \begin{align*}
                y_0^2y_1 + y_1^2y_2 + y_2^2y_3 + y_3^2y_4 + y_4^2y_0 = 0,
        \end{align*}
        where $y_0, \ldots, y_5$ are the coordinates in $\mathbb{P}^4$. $\widetilde{V}$ has the automorphism group $L_2(11)$ (as proven in \cite{Adler}). As noted in \cite{Mongardi}, $V$ is a ramified $3:1$ cover of $\widetilde{V}$, from which we get an obvious action of $\mathbb{Z}/3 \times L_2(11)$ on $V$ and therefore a polarized action on $F(V)$. Since $L_2(11)$ is simple (as a projective linear group of a field with more than 3 elements) by Lemma \ref{simple_lemma}, $L_2(11) \subset \mathbb{Z}/3 \times L_2(11)$ is symplectic. So from Table \ref{table:1}:
        \begin{align*}
            \T_{F(V)} = \begin{pmatrix} 22 & 11 \\ 11 & 22 \end{pmatrix}.
        \end{align*}
    \end{example}
    
    \begin{example}\label{6d2_A_7}
         Let $V$ be the cubic defined by the equation
        \begin{align*}
            x_0^3 + x_1^3 + x_2^3 + x_3^3 + x_4^3 + x_5^3 - (x_0 + x_1 + x_2 + x_3 + x_4 + x_5)^3 = 0.
        \end{align*}
        Any two of the seven summands can be transformed into each other without altering the original equation, giving us an action of $S_7$\footnote{\cite{Mongardi} claims the existence of an action of $S_7.3$ with the $\mathbb{Z}/3$ action construed analogously to the previous example, considering $V$ as a ramified covering. However such morphisms will not be automorphisms.}  which induces a polarized action on ($F(V), h)$. Noting that $A_7$ is simple, we can again deduce from (\ref{the_exact_sequence}) that it acts symplectically on $F(V)$. This leads us to
        \begin{align*}
            \T_{F(V)} \cong \begin{pmatrix} 2 & 1 \\ 1 & 18 \end{pmatrix}.
        \end{align*}
    \end{example}
    
    \begin{example}\label{6d2_Z3A_5:Z2}
        Let $V$ be the cubic defined by the equation
        \begin{align*}
            x_0^2x_1 + x_1^2x_2 + x_2^2x_3 + x_3^2x_1 + x_4^3 + x_5^3 = 0.
        \end{align*}
         In\cite{Mongardi}, the cubic  $C \subset \mathbb{P}^3$ defined by the equation
        \begin{align*}
            x_0^2x_1 + x_1^2x_2 + x_2^2x_3 + x_3^2x_1 = 0,
        \end{align*}
        is considered, by use known classifications, the author concludes that $C$ is isomorphic to the surface in $\mathbb{P}^4$ given by the equations
        \begin{align*}
            \begin{cases}
                y_0^3 + y_1^3 + y_2^3 + y_3^3 + y_4^3 = 0, \\
                y_0 + y_1 + y_2 + y_3 + y_4 = 0.
            \end{cases}
        \end{align*}
        The last variety has automorphism group isomorphic to $S_5$. Obviously they all induce automorphisms of $V$. However, only the subgroup $A_5$ will induce symplectic automorphisms on $F(V)$, but remaining members of $S_5$ will uniquely correspond to members of $Aut_s(F(V))$ by first composing them with the transposition $\phi \colon (x_0,\ldots, x_4, x_5) \mapsto (x_0,\ldots, x_5, x_4)$ on $V \subset \mathbb{P}^5$. There is also an automorphism $\text{diag}(\zeta_{15}, \zeta_{15}^{13}, \zeta_{15}^4, \zeta_{15}^7, \zeta_{15}^5, 1)$ where $\text{diag}(\lambda_1, \ldots, \lambda_n)$ denotes the diagonal $n \times n$ matrix with the given values on the diagonal (in order) and $\zeta_{15}$ is a fixed primitive root of unity of order 15, it will induce a symplectic automorphism of $F(V)$ too, its order when restricted to $V$ is 3, it commutes with previously discussed $A_5$ subgroup. Together the aforementioned morphisms generate a group isomorphic to $((\mathbb{Z}/3) \times A_5) \colon \mathbb{Z}/2$. We note however that there are also other, nonsymplectic, polarized automorphisms of $F(V)$. We will get all the finite polarized automorphisms if we extend the group inducing symplectic automorphisms, the morphism $\phi$, and the order three morphism $\text{diag}(1, 1, 1, 1, 1, \zeta_{15}^5)$, so in $PGL(6, \mathbb{C}$) we will get a group six times larger than $((\mathbb{Z}/3) \times A_5) \colon \mathbb{Z}/2$ and normalizing it, so we must have the following transcendental lattice
        \begin{align*}
            \T_{F(V)} \cong \begin{pmatrix} 10 & 5 \\ 5 & 10\end{pmatrix}.
        \end{align*}
    \end{example}
    \begin{example}\label{6d2_digits}
        One of the Fano schemes originally constructed in \cite{HM} is $F(V)$ for $V$ the cubic defined by the equation
        \begin{align*}
            x_0^3 + x_1^3 + x_2^3 + x_3^3 + x_4^3 + x_5^3 + \lambda (x_0 x_1 x_2 + x_3 x_4 x_5),
        \end{align*}
         where $\lambda = 3(i - 2e^\frac{\pi i}{6} -1)$. Aside from the obvious permutations and the variable-wise multiplications by the third degree roots of unity, the authors also find an additional linear isomorphism\footnote{Given by the matrix $\frac{1}{\sqrt{3}}\left[\begin{matrix}
        \omega & \omega^2 & 1 & & & \\
        1 & 1 & 1 & & & \\
        \omega^2 & \omega & 1 & & & \\
        & & & \omega^2 & \omega  & 1 \\
        & & & \omega^2 & \omega^2  & \omega^2 \\
        & & & \omega^2 & 1 & \omega 
        \end{matrix}\right]$, for $\omega = 2e^\frac{2 \pi i}{3}$.} leaving the equation invariant. The group generated by their images in $\text{PSL}(6, \mathbb{C})$ is isomorphic to a group of the form $3^{4 + 1} \colon 2 . 2^2$. This group will act symplectically on the Fano scheme, and a calculation shows that images of the generators in $\text{PGL}(6, \mathbb{C})$ will give us the group extended by 4 which according to means $F(V)$ has an action of the maximal possible overgroup of the studied group, and 
        \begin{align*}
            \T_{F(V)} \cong \begin{pmatrix} 6 & 0 \\ 0 & 6 \end{pmatrix}.
        \end{align*}
    \end{example}
    
\begin{remark*}
    In \cite[Therem 1.8]{LazaZheng}, the authors describe all the smooth cubic fourfolds which admit actions of some of the 15 maximal groups from \cite{HM}. In particular, the above 5 examples are the only such cubics which admit actions strictly containing any such group. That means other examples from Table \ref{table:1} of polarized \KTSTs{} of \BB{} degree 6, divisibility 2 cannot be described as varieties of lines of such manifolds. Note that not all \hk{} fourfolds admitting polarization like this are related to smooth cubics. The cases where it does not happen are described in detail in \cite{Looijenga}. 
\end{remark*}

\subsection{Debarre-Voisin varieties}\label{22_L2(11)}
Let us fix a ten dimensional complex vector space $V_{10} \cong \mathbb{C}^{10}$ and a three-form $\sigma \in (\bigwedge\nolimits^3 V_{10})^{*} \cong \bigwedge\nolimits^3 V_{10}^{*}$. We define a subvariety of the Grassmannian $\Grass(6, V_{10})$:
\begin{align} \label{DV_Variety}
    \text{DV}(\sigma) = \{V_{6} \in \Grass(6, V_{10}) \colon \sigma_{|\bigwedge^3 V_6} \equiv 0 \}.
\end{align}
In \cite{DV}, it is proven that it is a \KTST{} for which there exists a polarization of \BB{} 22 of divisibility 2 (and a generic such polarized manifold is of the form (\ref{DV_Variety})). 
\par

\begin{example}\label{22d2_L2(11)}
        Let us consider $G$ a subgroup of the general linear group of $V_{10}$. Its action on $V_{10}$ induces an action on the Grassmannian $\Grass(6, V_{10})$. If for all $g \in G$, and $u, v, w \in V_{10}$, there exists a nonzero constant $\lambda_{g,u,v,w}$, such that 
\begin{align*}
    \sigma(g(u) \wedge g(v) \wedge g(w)) = \lambda_{g,u,v,w} \sigma(u \wedge v \wedge w), 
\end{align*}
then by simple linear algebra, $G$ restricts to an action on $\text{DV}(\sigma)$. 
Consider now the representation\footnote{It is one of the two irreducible representations of $L_2(11)$ in dimension 10, the other one yields no results however.} of the group $\widetilde{G} = L_2(11)$ on $V_{10}$ given by the following two generators:

\begin{align*}
\begin{split}
     g_1  &= \left[ \begin{matrix} 0 & 1 & 0 & 0 & 0 & 0 & 0 & 0 & 0 & 0 \\
    1 & 0 & 0 & 0 & 0 & 0 & 0 & 0 & 0 & 0 \\
    0 & 0 & 0 & 0 & 1 & 0 & 0 & 0 & 0 & 0 \\
    0 & 0 & 0 & 0 & 0 & 1 & 0 & 0 & 0 & 0 \\
    0 & 0 & 1 & 0 & 0 & 0 & 0 & 0 & 0 & 0 \\
    0 & 0 & 0 & 1 & 0 & 0 & 0 & 0 & 0 & 0 \\
    0 & 0 & 0 & 0 & 0 & 0 & 0 & 0 & 1 & 0 \\
    -1 & -1 & -1 & -1 & -1 & -1 & 1 & -1 & 1 & 0 \\
    0 & 0 & 0 & 0 & 0 & 0 & 1 & 0 & 0 & 0 \\
    0 & 0 & 0 & 1 & 0 & 1 & -1 & 0 & -1 & -1    \end{matrix} \right],\\
    g_2  &= \left[ \begin{matrix} 0 & 0 & 1 & 0 & 0 & 0 & 0 & 0 & 0 & 0 \\
    0 & 0 & 0 & 1 & 0 & 0 & 0 & 0 & 0 & 0 \\
    -1 & 0 & -1 & 0 & 0 & 0 & 0 & 0 & 0 & 0 \\
    0 & 0 & 0 & 0 & 0 & 0 & 1 & 0 & 0 & 0 \\
    0 & 1 & 0 & -1 & 1 & 0 & 0 & 0 & 0 & 0 \\
    0 & 0 & 0 & 0 & 0 & 0 & 0 & 1 & 0 & 0 \\
    0 & 1 & 0 & 0 & 0 & 0 & 0 & 0 & 0 & 0 \\
    0 & 0 & 0 & 0 & 0 & 0 & 0 & 0 & 0 & 1 \\
    -1 & 0 & 0 & -1 & 0 & -1 & 1 & 0 & 1 & 1 \\
    0 & 0 & 0 & 0 & 0 & 1 & 0 & 0 & 0 & 0 \end{matrix} \right].  
\end{split}
\end{align*}

After computing the induced mappings in the 120-dimensional space $\bigwedge^3V_{10}^*$, one finds that there are two eigenspaces for the respective induced actions of $g_1$ and $g_2$ which intersect non-trivially, the intersection is a 1-dimensional space. A form $\sigma_0$ from this space gives rise to $X = \text{DV}(\sigma_0)$, the manifold sought after. It has the action of $L_2(11)$ which must be symplectic because the group is simple. Now the invariant lattice for $X$ must be
\begin{align*}
    \T_X \cong \begin{pmatrix} 2 & 1 \\ 1 & 6 \end{pmatrix}.
\end{align*}
As mentioned in the previous section we know we can extend actions in the $L_2(11)$ case, so there exist a nonsymplectic involution for this manifold.
\par
That example has been independently considered in \cite{Song} where much more thorough study of the manifold follows. Note that while we can prove the existence of an additional nonsymplectic automorphism, we do not know its geometric description. 
\end{example}

\subsection{Hilbert squares of quartics in \texorpdfstring{$\mathbb{P}^3$}{\space}}\label{squares}
Before giving the two examples, let us start with the following lemma.
\begin{lemma*}
    Let $(S,h)$ be a polarized K3 surface, where $h$ is a class of a hyperplane section of $S$ and $h^2 = 4$, and $S$ does not contain a line. Then there exists $(X, h - \xi)$ a birational model of $(S^{[2]}, h - \xi)$ in which (the image of) $h - \xi$ is ample.
\end{lemma*}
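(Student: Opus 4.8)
The plan is to reduce the statement to a lattice-theoretic non-existence claim in $\Homology^2(S^{[2]},\mathbb{Z})$, deduce it from the surjectivity part of Theorem~\ref{Torelli_period} together with the birational Torelli theorem for \hk{} manifolds, and then settle the lattice claim by a short case analysis driven by the Hodge index theorem and Riemann--Roch on $S$. Concretely, I would let $\xi\in\Homology^2(S^{[2]},\mathbb{Z})$ be the class with $2\xi$ the exceptional divisor of the Hilbert--Chow morphism $S^{[2]}\to\operatorname{Sym}^2 S$, so $\Homology^2(S^{[2]},\mathbb{Z})=\Homology^2(S,\mathbb{Z})\oplus\mathbb{Z}\xi$ with $\Homology^2(S,\mathbb{Z})$ unimodular, $\xi\perp\Homology^2(S,\mathbb{Z})$, $\xi^2=-2$, $\di(\xi)=2$, and $h\in\NS(S)$ with $h^2=4$. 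Set $D\defeq h-\xi$; then $D^2=2$ and $\di(D)=1$, and since $h$ lies in the closure of the ample cone of $S^{[2]}$ (it is pulled back from the symmetric product) and $D^2>0$, $D\cdot h=4>0$, the class $D$ lies in the positive cone $\mathcal C\subset\NS(S^{[2]})_\mathbb{R}$. I claim it then suffices to show that $D^{\perp}\cap\NS(S^{[2]})$ contains no \emph{wall divisor}, i.e.\ no $v$ with $v^2=-2$, or with $v^2=-10$ and $\di(v)=2$. Indeed, granting this, choose a marking $\eta\colon\Homology^2(S^{[2]},\mathbb{Z})\to L_{\mathrm{K3}^{[2]}}$ sending $D$ to a fixed primitive vector $h_0$ of square $2$ and divisibility $1$ (such $\eta$ exists by Eichler's criterion); the period of $S^{[2]}$ lies in $\mathscr P_{h_0}$, and by Theorem~\ref{Torelli_period} (with $2n=2$, $\gamma=1$) the claim says exactly that it lies in $\wp_{h_0}(\mathscr M_2^{(1)})$, so it equals $\wp_{h_0}(X)$ for a polarized \KTST{} $(X,D_X)$. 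The composite Hodge isometry $\Homology^2(X,\mathbb{Z})\to\Homology^2(S^{[2]},\mathbb{Z})$ then carries $D_X$ to $D$; since it respects periods and sends the positive-cone class $D_X$ to the positive-cone class $D$ it is orientation-preserving, and as the discriminant group of $L_{\mathrm{K3}^{[2]}}$ has trivial isometry group it therefore lies in the monodromy group of $\mathrm{K3}^{[2]}$-type; by the birational Torelli theorem it is induced by a birational map, so $(X,D_X)$ is a birational model of $(S^{[2]},h-\xi)$ in which $h-\xi$ is ample.

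For the lattice claim I would write a hypothetical wall divisor as $v=w+k\xi$ with $w\in\NS(S)$, $k\in\mathbb{Z}$. Then $v\cdot D=w\cdot h+2k$, so $v\cdot D=0$ forces $w\cdot h=2j$ even with $k=-j$; hence $v^2=w^2-2j^2$, and $\di(v)=\gcd\!\bigl(\di_S(w),\,2j\bigr)$ because $\Homology^2(S,\mathbb{Z})$ is unimodular and $\xi$ is an orthogonal $\langle-2\rangle$-summand, while $\di_S(w)$ divides $\gcd(w\cdot h,w^2)$. The inputs are: the Hodge index theorem on $\NS(S)$, giving $(w\cdot h)^2>w^2h^2$ whenever $w^2>0$ and $w,h$ are independent (independence holds throughout, by a parity check); Riemann--Roch on the $\mathrm{K3}$ surface $S$, by which $w$ or $-w$ is effective once $w^2\ge-2$; ampleness of $h$, so a nonzero effective class has strictly positive $h$-degree; and the hypothesis that the quartic $S$ contains no line, which excludes effective classes of $h$-degree $1$ and forces an effective class of $h$-degree $2$ to be either a smooth plane conic (self-intersection $-2$) or a sum of two lines. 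If $v^2=-2$ then $w^2=2j^2-2$; Hodge index gives $|j|\le1$; $j=0$ gives $w^2=-2$ with $w\cdot h=0$, impossible by ampleness; $|j|=1$ gives, up to sign, $w$ effective of $h$-degree $2$ with $w^2=0$, impossible by the conic/two-lines dichotomy. If $v^2=-10$ and $\di(v)=2$ then $w^2=2j^2-10$, Hodge index forces $|j|\le3$, and up to sign $j\in\{0,1,2,3\}$: for $j=0$, $v=w$ lies in the unimodular summand, so $\di(v)=1$; for $j=1$, $\di(v)=2$ forces $w=2u$ with $u$ primitive, $u^2=-2$, $u\cdot h=1$, i.e.\ a line; for $j=2$, $w^2=-2$ is not divisible by $4$, so $\di_S(w)=1$ and $\di(v)=1$; for $j=3$, $\di(v)=2$ forces $w=2u$ with $u$ primitive, $u^2=2$, $u\cdot h=3$, and then $C\defeq 3u-2h$ has $C^2=-2$ and $C\cdot h=1$, again a line. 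Every case contradicts a hypothesis, which proves the claim.

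I expect the lattice computation to be routine once organized this way; its only subtlety is tracking which steps actually use "$S$ has no line" rather than merely $h^2=4$ (namely the $v^2=-2$, $|j|=1$ case and the $v^2=-10$ cases $j=1,3$). The genuinely delicate point is the last link of the reduction: promoting the abstract polarized manifold $(X,D_X)$ produced by Theorem~\ref{Torelli_period} to an honest birational model of $(S^{[2]},h-\xi)$. This needs the birational Torelli theorem for \hk{} manifolds together with Markman's identification of the $\mathrm{K3}^{[2]}$-type monodromy group, plus the verification that the comparison Hodge isometry is a parallel-transport (orientation-preserving) operator; I have sketched why that holds here, but it is the place where the argument is least mechanical.
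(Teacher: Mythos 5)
Your lattice computations are sound and, for the classes orthogonal to $h-\xi$, essentially reproduce the paper's own case analysis (both the $-2$ case and the $-10$, divisibility $2$ case reduce, via the Hodge index theorem and Riemann--Roch, to producing a class $u$ with $u^2=-2$, $u\cdot h=1$, i.e.\ a line in $S$). The gap is in the reduction. It is \emph{not} sufficient that $h-\xi$ be a positive class orthogonal to no wall divisor: the walls cut the positive cone into chambers, but only the chambers lying \emph{inside the movable cone} are pullbacks of ample cones of birational models; the remaining chambers are obtained from those by the reflections in effective $-2$-classes, and a class sitting in one of them is ample on no birational model. Your Torelli argument does produce a polarized $(X,D_X)$ together with a parallel-transport Hodge isometry $\psi$ sending $h-\xi$ to $D_X$, and Markman's theorem then gives that $X$ and $S^{[2]}$ are birational --- but it does \emph{not} give that the birational map induces $\psi$; it induces $\psi$ only up to composition with such a reflection group element, so $h-\xi$ need not be carried to an ample class. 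The class $h+\xi$ makes this concrete: it passes your test whenever $h-\xi$ does (apply the isometry $\xi\mapsto-\xi$, which preserves the Hodge structure, the squares, the divisibilities, and hence the set of wall divisors), yet $(h+\xi)\cdot(2\xi)=-4<0$ against the effective Hilbert--Chow exceptional divisor of class $2\xi$, so $h+\xi$ is not movable and is ample on no birational model of $S^{[2]}$.

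What is missing is therefore the verification that $h-\xi$ lies in the interior of the movable cone. The paper does this by using that $h$ is nef (hence in the closure of the movable cone) and checking that the segment $h-t\xi$, $0<t\le 1$, meets no $-2$-wall except at $t=0$, where $\pm\xi$ are shown to be the only orthogonal $-2$-classes and the segment moves to the movable side of $\xi^{\perp}$. Orthogonality of a $-2$-class to some intermediate $h-t\xi$ is a genuinely different Diophantine condition from orthogonality to $h-\xi$ itself; the paper finds that only $t=\tfrac{1}{2}$ can occur and that it again forces a class $u$ with $u^2=-2$, $u\cdot h=1$, i.e.\ a line. Your proof omits this entirely. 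If you add that analysis --- it uses exactly the Hodge-index and no-lines toolkit you already set up --- the argument closes; as written, the claimed reduction to ``no wall divisor in $(h-\xi)^{\perp}$'' is false.
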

\begin{proof}
    The ample cone is one of the (open) chambers inside the interior of the movable cone in $\NS_X \otimes \mathbb{R}$ cut by the -10 classes of divisibility 2. The movable cone itself is cut by -2 classes inside the positive cone (\cite{D}, section 3.7). Birational isomorphisms \KTST{} type manifolds induce isometries of the second cohomology lattices which transform the ample cone into one of the remaining chambers (and for each chamber there exists an appropriate birational isomorphism). The line bundle associated with $h$ is necessary nef (hence it lies in the closure of the ample cone); furthermore it lies on the boundary of the boundary of the movable cone as the -2-class $\xi$ lies in $\NS_X$. So $h$ is on the boundary of both the movable and ample cones; the movable cone is in the direction of $-\xi$. So for $h - \xi$ to lie inside one of the chambers, it is both necessary and sufficient for it to not be orthogonal to any -10-classes of divisibility 2 (so it does not lie on the boundary of a chamber) and for there to be no vector orthogonal to a -2-class on the segment from $h$ to $h - \xi$ with the exception of $h$ for which $\xi$ (and $-\xi$) must be the only such vector (so that the segment is contained in the movable cone) .
    
    Assume that there exists $y$, a -10-class of divisibility 2 perpendicular to $h - \xi$. For $y$ to be a primitive class of divisibility 2, it must be of the form $2kx + (2l+1)\xi$ for some nonzero integers $l$, $k$, and $x$ a primitive element from the K3 lattice\footnote{I.e.\ the unimodular lattice $\Homology^2(S, \mathbb{Z} \equiv L_{\text{K3}^{[2]}} = E_{8}(-1)^{\oplus 2} \oplus U^{\oplus 3}$ which naturally embeds in $\Homology^2(S^{[2]}, \mathbb{Z})$.}. Writing $y^2 = -10$ and $h \cdot y = 0$, we arrive at
    \begin{align*}
        \begin{cases}
            x^2 = 2\frac{l^2 + l - 1}{k^2}, \\
            h \cdot x = \frac{-2l - 1}{k}.
        \end{cases}
    \end{align*}
    Notice that the lattice generated by $x$ and $h$ must be embedded in the Picard lattice of $S$, so because $h^2 > 0$, then $h^2 \cdot x^2 < (h \cdot x)^2$ by the Hodge index theorem. Together with the above system of equations, it gives us the following possible Gram matrices for the lattice (up to the change of sign of $x$):
    
    \begin{align} 
        \label{line_in_a_quartic}
        \begin{pmatrix}-2 & 1 \\ 1 & 4\end{pmatrix},
    \end{align}
    \begin{align}
        \label{bad_sublattice}
        \begin{pmatrix}2 & 3 \\ 3 & 4\end{pmatrix}. 
    \end{align}
    We notice that the lattice (\ref{bad_sublattice}) contains the lattice (\ref{line_in_a_quartic}) as a sublattice with the basis $3x - 2h$ and $h$.
    
    Now assume there exists a -2-class perpendicular to some vector $h_t = h-t \xi$ where $0 < t \le 1$ is a real number (i.e.\ there's a wall separating $h$ and $h - \xi$). Write that -2-class as $y = k x + l \xi$ where $x$ is a primitive vector from $\Pic(S)$ and $l, \, k$ are integers. Since $k$ must be nonzero for $t \neq 0$, $l$ must be nonzero\footnote{Otherwise there would be $x^2 = -2$ and $h \cdot x = 0$, but $h$ is ample on the K3 surface $S$, so it must have nonzero intersection with every -2-class.}, from the equalities $y^2 = -2$ and $h_t \cdot y = 0$, we get
    \begin{align*}
        \begin{cases}
            x^2 = 2\frac{l^2 - 1}{k^2}, \\
            h \cdot x = 2t\frac{l}{k}.
        \end{cases}
    \end{align*}
    Since the sublattice generated by $x$ and $h$ must again be embedded in the Picard group, so again by the Hodge index theorem $h^2 \cdot x^2 < (h \cdot x)^2$, we arrive at $t^2 > 2 \frac{l^2-1}{l^2}$ which given the fact $t^2$ is at most 1, means $l^2 = 1$, and because $2t\frac{l}{k}$ is an integer, $t = \frac{1}{2}$ and $k^2 = 1$. So we again arrive on the lattice generated by $x$ and $h$ having a Gram matrix of the form (\ref{line_in_a_quartic}).
    
    In the case $t = 0$, let us show that $\xi$ and $-\xi$ are the only $-2$-classes perpendicular to $h_t = h$. Put $y = k x + l \xi$ as before and assume $k \neq 0$. We have
    \begin{align*}
        \begin{cases}
            x^2 = 2\frac{l^2 - 1}{k^2}, \\
            h \cdot x = 0.
        \end{cases}
    \end{align*}
    By the Hodge index theorem, $4 x^2 = h^2 \cdot x^2 < 0$. So $l^2 = 0$. But then $k x$ is a $-2$-class on $S$, so because $h$ is ample, $(kx)\cdot h \neq 0$. A contradiction.
    
    To end the proof, assume that there exists a class $x$ such that $x^2 = -2$ and $x \cdot h = 1$. Because $h$ is ample and $x \cdot h > 1$, $x$ is effective. So because $x^2 = -2$, there exists a curve $\Gamma \subset S$ of  class $x$. Now $h$ is a class of an intersection of $S$ with a hyperplane, so since $x \cdot h = 1$, $\Gamma$ has intersection number 1 with a hyperplane. So it is a line. But there are no lines inside $S$. A contradiction.

\end{proof}
\begin{remark*}
In fact in the Lemma above the polarisation $h-\xi$ is of degree $2$ and induces a $6:1$ map to a quadric in $\mathbb{P}^5$. The map associates to a length $2$ sub-scheme of the quartic the line spanned by it.
\end{remark*}
\begin{example}\label{2_2L2(7)}
        Consider the surface $S \subset \mathbb{P}^3$ given by the equation
    \begin{align*}
        x_0^3x_1 + x_1^3x_2 + x_2^3x_0 + x_3^4 = 0.
    \end{align*}
    It turns out to be the K3 surface with the transcendental lattice 
    \begin{align*}
        \T_S \cong \begin{pmatrix}14 & 0 \\ 0 & 14\end{pmatrix}.
    \end{align*}
    It admits a symplectic action of a group $H_0 = L_2(7)$ (effectively acting on the first three summands), and a nonsymplectic isometry of order four $\psi$ (last summand) that commutes with the action of $H_0$, $H = \langle H_0, \psi \rangle$ leaves an ample class $\widetilde{h}$ of square 4 invariant.
    One notices that the curve given by the equation $x_3 = 0$ is a hyperplane section of $S$ invariant under the action of $H$. So it is of class $\widetilde{h}$. Now by the lemma above, there exist $X$ a birational model of $S^{[2]}$ with $h = \widetilde{h}-\xi$ ample where via identification we write
    \begin{align*}
        L_X = L_S \oplus \langle \xi \rangle,
    \end{align*}
    for $L_S = \Homology^2(S, \mathbb{Z})$, $L_X = \Homology^2(X, \mathbb{Z})$ and $\xi^2 = -2$. Then $X$ inherits the $H$ action from $S$ with $H_0$ still being symplectic. We shall identify action of $H$ on both $S$ and $X$. Notice that the action of $H$ still preserves the vector $h$ and that $h^2 = 2$; also $h \cdot \T_X = h \cdot \T_S = 0$. Define the reflection $\rho$  by $h$ as
    \begin{align*}
        \rho \colon L_X \ni v \mapsto -v + (h \cdot v) h \in L_X.
    \end{align*}
    Note that $\rho$ is an isometry of $L_X$, and it also commutes with the action of $H$ on $X$. It acts as minus identity on $\T_X$, but so does $\psi$. Therefore the composition $\phi = \rho \circ \psi$ is a symplectic mapping not in $H_0$ and commuting with its elements. So we have a holomorphic action of $G = \langle H, \rho \rangle \cong \mathbb{Z}/2 \times \mathbb{Z}/4 \times L_2(7)$ with a subgroup of symplectic mappings $\widetilde{G} = \langle H, \psi \rangle \cong \mathbb{Z}/4 \times L_2(7)$. 
    
\end{example} 

\begin{example}\label{2_Z5_S5}
    Consider the surface $S \subset \mathbb{P}^3$ given by the equation
    \begin{align*}
        \sum_i x_i^4 - \sum_{i,j} x_i^2x_j^2 = 0.
    \end{align*}
    It is studied extensively in \cite{BS}, Section 4. It turns out to be the K3 surface with the transcendental lattice 
    \begin{align*}
        \T_S \cong \begin{pmatrix}4 & 0 \\ 0 & 40\end{pmatrix}.
    \end{align*}
    It admits a symplectic action of a group $H_0 = M_{20}$ and a nonsymplectic involution $\psi$ such that $|H| = 1920$ where $H = \langle H_0, \psi \rangle$ and $H$ leaves a hyperplane section class $\widetilde{h}$ of square 4 invariant. As $S$ again contains no lines, we find $X$ a birational model of $S^{[2]}$ which inherits the action of $H$ with an additional anti-symplectic involution $\rho$ commuting with $H$. So $X$ has a symplectic action of $\widetilde{G} = \langle H, \rho \circ \phi \rangle$ of order 1920 which by the classification \cite{HM} (Table 12) is the group $\mathbb{Z}_2^4:S_5$ that appears in our classification. 
\end{example}

\subsection{\DEPWs}
\begin{example}\label{EPW_L2(11)}
  In \cite{Mongardi} (Example 4.5.2), a so called \dEPW\ $X$ (which is \KTST{}, introduced in \cite{o2006irreducible}) with a symplectic action of $L_2(11)$ and an additional antisymplectic involution is constructed. Then we have
     \begin{align*}
         \T_{X} \cong \begin{pmatrix}
         22 & 0 \\ 0 & 22
         \end{pmatrix},
     \end{align*}
     and the action fixes a polarization $h$ with $h^2 = 2$.
\end{example}

\begin{example}\label{EPW_A7}
    In \cite{BW}, a joint work with Simone Billi, we construct two \dEPWs{} with an action of $\mathbb{Z}/2 \times A_7$. We prove that they are non-isomorphic as polarized manifolds, but we cannot say which correspond to which entry in the Table \ref{table:1} (or even whether they correspond to different entries).
\end{example}

\appendix

\section{Codes}\label{codes}
This subsection contains the codes\footnote{Sometimes abbreviated for conciseness; the full version can be found in the auxiliary files attached on arxiv.} for the computer aided computations done for the purpose of this paper as well as rationale for them.

\subsection{Gluing isometries}\label{main_code}

To recall the situation, we have a \fancyname{} isometry\footnote{See Notation \ref{good_notation}} $f$ on $L^{\widetilde{G}}$. We want to check whether it  extends to an isometry of the entire lattice $L$. We do so by checking the conditions from Lemma \ref{extension_condition} using the anti-embedding $\gamma \colon D_{L_{\widetilde{G}}} \rightarrow D_{L^{\widetilde{G}}}$ which exists by Lemma \ref{monomorphism}.

We note that according to \cite[Theorem 2.2]{HM} the pair $(L_{\widetilde{G}}(-1), \widetilde{G})$ is isometric\footnote{I.e.\ there exists an isometry $\phi \colon L_{\widetilde{G}}(-1) \rightarrow \Lambda_{\widetilde{G}'}$ and a group isomorphism $\iota \colon \widetilde{G} \rightarrow H$ such that $\phi \circ g \circ \phi^{-1} = \iota(g)$ for any $g \in \widetilde{G}$.} to $(\Lambda_{\widetilde{G}'}, \widetilde{G}')$ where $\Lambda$ is the Leech lattice\footnote{The unique positive-definite, unimodular, even lattice of rank 24 without vector $v$ such that $v^2 = 2$.} and $\widetilde{G}'$ is a group isomorphic to $\widetilde{G}$. That isometry induces an isometry of discriminant groups $D_{L_{\widetilde{G}}(-1)}$ and $D_{\Lambda_{\widetilde{G}}}$ which in turn gives an anti-isometry of $D_{L_{\widetilde{G}}}$ and $D_{\Lambda_{\widetilde{G}'}}$. Combined with the above paragraph, we may consider $\widetilde{\gamma} \colon D_{\Lambda_{\widetilde{G}}} \rightarrow D_{L^{\widetilde{G}}}$, an isometric embedding.

We note that not any embedding will work, i.e.\ not all such embedding will "glue" the two lattices into the lattice $L_{\text{K3}^{[2]}}$. However, based on \cite[Chapter 5, Theorem 13(b)]{sphere_pack}, $L_{\text{K3}^{[2]}}$ is the unique even lattice of signature (3, 20) such that its discriminant group has order 2 and the value of the quadratic form on the nontrivial element of the group is $\frac{3}{2} \modulo 2$. The lattice "glued" from $L^{\tilde{G}}$ and $L_{\tilde{G}}$ by any embedding $\gamma \colon D_{L_{\tilde{G}}} \rightarrow D_{L^{\tilde{G}}}$ will necessarily satisfy all conditions but the last one (up to change of the sign), so it is the only one we need to check.

So we begin by defining \texttt{Gram} to be the Gram matrix of the Leech lattice and initializing three lists: \texttt{group\_names}, \texttt{invariant\_lattices}, and\\ \texttt{coinvariant\_lattices} as empty. 
Below, we describe the code for constructing entries of the three lists based on an example of $\widetilde{G} = L_2(11)$.

\begin{lstlisting}[language=MAGMA]

//L_2(11)
lattices1 := []; 
Append(~lattices1, LatticeWithGram(
    Matrix([[2,1,0],[1,6,0],[0,0,22]]))
);
Append(~lattices1, LatticeWithGram(
    Matrix([[6,2,2],[2,8,-3],[2,-3,8]]))
);
//generators for G1 skipped for conciseness
G1 := MatrixGroup<24, IntegerRing() | /* ... */ >;
//Leech lattice with action of G1
LG1 := LatticeWithGram(G1, Gram); 
//coinvariant sublattice of LG1 under the action of G1 
//the above, now treated as an abstract lattice
temp := Gperb(LG1, GInvLat(LG1)); 
LG1co := LatticeWithGram(Group(temp), GramMatrix(temp));
Append(~group_names, "$L_2(11)$");
Append(~invariant_lattices, lattices1);
Append(~coinvariant_lattices, LG1co);
\end{lstlisting}

The list \texttt{group\_names}  contains the names of the 15 maximal groups in the tex format like \texttt{"\$L\_2(11)\$"}. The entries of the list \texttt{invariant\_lattices} are the lists of the possible invariant lattices $L^{\widetilde{G}}$ according to the \cite[Table 9]{HM}, in the case of $L_2(11)$ they are stored in the \texttt{lattices1} which then becomes an entry of \texttt{invariant\_lattices}. Now \texttt{G1} is a 24-dimensional representation of $L_2(11)$ as described in the auxiliary computational files for \cite{HM}. \texttt{LG1} is the coinvariant sublattice of the Leech lattice under the action of  \texttt{G1}, \texttt{Gperb} is the function \texttt{perb} computing the orthogonal complement of a sublattice inside the Leech lattice also to be found in the auxiliary files to \cite{HM} modified to work on a \texttt{GLattice} (i.~e. a lattice with a defined group action) and the function \texttt{GInvtLat} (defined in the auxiliary files) computes the invariant sublattice of a \texttt{GLattice} \texttt{L} under its associated group action via simple linear algebra.

The final result of our computations is the list \texttt{tuples} whose entries correspond to the rows in Table \ref{table:1}. We obtain them by feeding the entries of the three lists described above to the function \texttt{ExtendableIsometries}.

\begin{lstlisting}[language=MAGMA]
tuples := [];
for i in [1..15] do
	for t in ExtendableIsometries(
	    invariant_lattices[i],
	    coinvariant_lattices[i],
	    group_names[i]
	) do 
		Append(~tuples, t);
	end for;
end for;
\end{lstlisting}

Let us introduce some auxiliary functions (the implementation is simple enough to skip it an interested reader is invited to check attached file). The body of 
\begin{lstlisting}[language=MAGMA]
GoodIsometries := function(L)
    /*
        takes a 3-dimensional positive definite lattice L
        and returns the list its good isometries 
        (as defined in the paper) 
    */
end function;

Divisibility := function (h, Ld, im, proj)	
	/*	
    	h - a vector from L^G whose divisibility
    	in the K3^[2] lattice we want to find; 
    	im - an image of D(L_G) in D(L^G);
    	proj - a projection form the dual (L^G)*;
    	to the discriminant group D(L^G);
    	Ld - a dual lattice to L^G
	*/
end function;
\end{lstlisting}
Then, the body of \texttt{ExtendableIsometries} is as follows:
\begin{lstlisting}[language=MAGMA]
ExtendableIsometries := function (lattices, L_coinvariant, GName) 
    //Autdisc defined in aux. files of [HM19]
    //it returns information on the discriminant group
    //and its isometries
    Isos_DL_coinv, /* multiple other variables */ 
	    := Autdisc(L_coinvariant);
    BaseG := Group(L_coinvariant);

	list_of_tuples := [];
	extendable_isos := [];
	gens := [Matrix(g) : g in Generators(BaseG)];
	for lat in lattices do 
	    //invariant sublattice  of BaseG in the K3^[2] lattice
		L_inv := LatticeWithGram(
	        ChangeRing(
		        GramMatrix(lat),
		        Integers()
		    )
		);
		Isos_DL_inv, /* multiple other variables */ 
		:= Autdisc(L_inv);
		Iso_emb := IsometricEmbeddings(
		    DL_coinv, q_L_coinv, DL_inv, q_L_inv
		);
		isos := GoodIsometries(L_inv);
		extendable_by_gamma := [];
		for gamma in Iso_emb do
			im := Image(gamma);
			/*
			below we check whether the discriminant form
			has the apropiate value
			*/
			square := 0; //dummy value
			for x in DL_inv do
				if x in im then
					continue;
				end if;
				orthogonal := true; 
				for y in im do
					product := (q_L_inv(x+y) - q_L_inv(x) - q_L_inv(y))/2;
					if product in Integers() then
						continue;
					else
						orthogonal := false;
						break;
					end if;
				end for;
				if orthogonal then
					square := q_L_inv(x);
					break;
				end if;
			end for; 
			if square ne 3/2 then
				continue;
			end if;
			gamma_extendable := [];
			for iso in isos do
			    //h will be the polarization fixed by the group
				h := Basis(
				    InvariantLat(L_inv, MatrixGroup<3,
				        IntegerRing() | [iso]>)
				    )[1];
				//CheckGlueable will check if our good isometry
				//can be extended to an isometry of the whole lattice
				//and return true and the function on the coinvariant 
				//more on that later
				bo, f_coinv := CheckGlueable(/* many a variable */);
				if not bo then
					continue;
				end if;
				TranscendentalLattice := function(L, h) 
				    /*
				        computes the complement of h in L
				        which will be the transcendental latiice
					*/
				end function;
				T := TranscendentalLattice(L_inv, h);
				//L_inv_d is the dual of L_inv (defined by Autdisc earlier)
				//we treat h as an element of it to calcualte
				//the divisibility
				h1 := L_inv_d ! h;
				h_div := Divisibility(h1, L_inv_d, im, pi_L_inv);
				//below we check if there exists a vector
				//of divisibility 2 in the transcendental lattice
				//if there is no corresponding manifold
				//can be a Hilbert scheme of a K3 surface
				B_Td := [L_inv_d!t : t in Basis(T)];
				T_div := 1;
				if (
				    Divisibility(B_Td[1], L_inv_d, im, pi_L_inv) eq 1 and
					Divisibility(B_Td[2], L_inv_d, im, pi_L_inv) eq 1 and 
					Divisibility(B_Td[1] + B_Td[2], L_inv_d, im, pi_L_inv) 
					eq 1
				) then
					T_div := 1;
				else
					T_div := 2;
				end if;
				T := GramMatrix(T);
				if T[1][2] le 0 then
					T[1][2] *:= -1;
					T[2][1] *:= -1;
				end if;
				tuple := [* *];
				Append(~tuple, (h, h));
				/* we gather all the relevant information in tuples*/


				Append(~tuple, GramMatrix(L_inv));
				duplicate := false;
				for tu in gamma_extendable do
					/*
					    we check for duplicates by comparing values
					*/
				end if;
			end for;
			Append(~extendable_by_gamma, gamma_extendable);
		end for;
		for tuples in extendable_by_gamma do
			for old_tuple in tuples do
			    /* here we construct the tuples
			        containing the info from the table
			        as well as the generators of 
			        the overgroup of BaseG found
			        (they can be found in a separate file)
			        and check for duplicates
			     */
				Append(~extendable_isos, tuple);
			end for;
		end for;
	end for;
	return extendable_isos;
end function;
\end{lstlisting}
Now to conclude, we present the body of the \texttt{CheckGlueable} function which actually checks the conditions from Lemma \ref{extension_condition}:

\begin{lstlisting}[language=MAGMA]
CheckGlueable := function(/*many a variable*/)
    /*
        the use of permutation representations below may appear contrived
        but was probably used by Hohn and Mason for computational efficiency
        
        L0 - the coinvariant sublattice lattice in our setting;
        L - the invarinat sublattice;
        DL0 and DL - respective determinant groups;
        gamma : DL0 -> DL - a gluing isomorphism;
        iso - the isomoephism of L we wish to extend;
        Isos_DL - permutation group representation of isometries of DL;
        Isos_DL0 - ditto for DL;
	    Im_Isos_DL - permutation group representation of
	    the group of isometries of DL induced by isometries of L;
	    phi_LO - a function from automorphisms of DL0 to Isos_DL0
	    pi_L - the projection from the dual of L onto DL
        g_L0 - a map from isometries of L0 to Isos_DL0
    */
	ordiso := Order(iso);
	f_L := ChangeRing(iso, RationalField());

	gens := Generators(DL0);
	//below we check the condition 1. from the lemma 
	im := Image(gamma);
	for g in gens do
		if not pi_L((gamma(g) @@ pi_L) * f_L) in im then
			return false, _;
		end if;
	end for;
	//now we proceed the condition 2.
	f_DL0 := hom<DL0 -> DL0 |
	    [<x, pi_L((gamma(x) @@ pi_L) * f_L) @@ gamma>:
	    x in Generators(DL0)]
	>;
	ff := (AutomorphismGroup(DL0)!f_DL0)@phi_L0;
	if ff in Im_Isos_DL0 then
		ff := Im_Isos_DL0!ff;
	else
		return false, _;
	end if;
	//below we define the isometry of the coinvariant lattice
	//which can be glued with the isometry of L we started with
	f_L0 := ff @@ g_L0;
	//now we try to find f a possible //different preimage of ff 
	//with higher order relative to G
	//knowing that such f must normalize G
	BaseG := ChangeRing(Group(L0), Rationals());
	flag_order := false;
	for i in [1..Floor(ordiso/2)] do
		if f_L0^i in BaseG then
			flag_order eq true;
			break;
		end if;
	end for;
	if flag_order then
		for f in Isos_L0 do
			if not f^ordiso in BaseG then
				continue;
			end if;
			flag_order2 := false;
			for i in [1..Floor(ordiso/2)] do
				if f_L0^i in BaseG then
					flag_order2 := true;
					break;
				end if;
			end for;
			if flag_order2 then
				continue;
			end if;
			normalizes := true;
			for g in gens do
				if not f*g*f^(ordiso - 1) in BaseG then
					normalizes := false;
					break;
				end if;
			end for;
			if normalizes and g_L0(f) eq ff then
				f_L0 := f;
			end if;
		end for;
	end if;
	
	//we return true and f_L0 as an inter 20x20 matrix
	return true, ChangeRing(Matrix(f_L0), Integers());	
end function;

\end{lstlisting}

\subsection{No lines in a quartic}\label{no_lines_code}
The following \cite{M2} code checks that there are no lines in the two quartics of interest to us in Section \ref{squares}. 

We work over three rings $R = \mathbb{Q}[x_0, x_1, x_2, x_3]$, $P_1 = \mathbb{Q}[a,b,c,d]$, and $P_2 = P_1[s,t]$. The polynomial $p$ defining the quartic we are interested in lies in $R$. We consider a line in the three-dimensional projective space given by a parametrization, e.g.\ $(s, t, as + bt, cs + dt)$ where we treat $a,b,c,d$ as constants and $s, t$ as parameters. We have
\begin{align*}
    p(s, t, as + bt, cs + dt) = \sum_{\alpha + \beta =  \deg p} p_{\alpha, \beta}(a,b,c,d)s^{\alpha}t^{\beta},
\end{align*}
for some polynomials $p_{\alpha, \beta}$ of four variables. If the line is to be contained in the variety defined by $p$, all the $p_{\alpha, \beta}$ must vanish on $(a, b, c, d)$. So we check if they have a common zero. If they do not (so the radical of the ideal generated by them is not proper or is the irrelevant ideal), there is no line of the form $(s, t, as + bt, cs + dt)$ for the constants $a,b,c,d$ and the parameters $s, t$. After checking over every possible parametrization, we determine there are indeed no lines in the variety $p^{-1}(0)$.

\begin{lstlisting}[language=Macaulay2]
    R = QQ[x0, x1, x2, x3];
    P1 = QQ[a,b,c,d];
    P2 = P1[s,t];
    p1 = x0^3*x1 + x1^3*x2 + x2^3*x0 + x3^4;
    p2 = x0^4 + x1^4 + x2^4 + x3^4 + 12*x0*x1*x2*x3;
    f1 = map (P2, R, {s, t, a * s + b * t, c * s + d * t});
    f2 = map (P2, R, {s, a * s + b * t, t, c * s + d * t});
    f3 = map (P2, R, {s, a * s + b * t, c * s + d * t, t});
    f4 = map (P2, R, {a * s + b * t, s, t, c * s + d * t});
    f5 = map (P2, R, {a * s + b * t, s, c * s + d * t, t});
    f6 = map (P2, R, {a * s + b * t, c * s + d * t, s, t});
    listf = {f1, f2, f3, f4, f5, f6};
    for f in listf do (
    	print radical ideal new List from 
    	((coefficients f(p1))_1)_0
    )
    for f in listf do (
    	print radical ideal new List from
    	((coefficients f(p2))_1)_0
    )
\end{lstlisting}

\section{15 maximal groups}\label{HM_table}

 As always, notation for groups is the same as in \cite{HM} and \cite{Wilson1985ATLASOF}.

\footnotesize
\begin{center}

\begin{longtable}{ |c|c|c| } 

\hline\hline

\# & $\widetilde{G}$ & $L^{\widetilde{G}}$ \\ 

\hline\hline

\rnumber. & $L_2(11)$ &  $\begin{pmatrix}
2 & 1 & 0 \\
1 & 6 & 0 \\
0 & 0 & 22
\end{pmatrix}$, $\begin{pmatrix}
6 & 2 & 2 \\
2 & 8 & -3 \\
2 & -3 & 8
\end{pmatrix}$ \\
\hline

\rnumber. &  $L_3(4)$ & $\begin{pmatrix}
2 & 0 & 0 \\
0 & 10 & 4 \\
0 & 4 & 10
\end{pmatrix}$, $\begin{pmatrix}
4 & 2 & 0 \\
2 & 4 & 0 \\
0 & 0 & 14
\end{pmatrix}$ \\ 
\hline

\rnumber. & $A_7$ & $\begin{pmatrix}
2 & 1 & 0 \\
1 & 2 & 0 \\
0 & 0 & 70
\end{pmatrix}$, $\begin{pmatrix}
2 & 0 & 1 \\
0 & 6 & 0 \\
1 & 0 & 18
\end{pmatrix}$, $\begin{pmatrix}
4 & 2 & 1 \\
2 & 6 & 3 \\
1 & 3 & 12
\end{pmatrix}$, $\begin{pmatrix}
6 & 3 & 1 \\
3 & 6 & 1 \\
1 & 1 & 8
\end{pmatrix}$
\\ 
\hline
\rnumber. & $\mathbb{Z}_2^3:L_2(7)$ & $\begin{pmatrix}
4 & 0 & 0 \\
0 & 6 & 2 \\
0 & 2 & 10
\end{pmatrix}$
\\ 
\hline

\rnumber. & $\mathbb{Z}_2 \times L_2(7)$ & $\begin{pmatrix}
2 & 0 & 0 \\
0 & 14 & 0 \\
0 & 0 & 14
\end{pmatrix}$, $\begin{pmatrix}
4 & 2 & 0 \\
2 & 8 & 0 \\
0 & 0 & 14
\end{pmatrix}$ \\ 
\hline

\rnumber. & $\mathbb{Z}_2:A_6$ & $\begin{pmatrix}
2 & 0 & 0 \\
0 & 4 & 0 \\
0 & 0 & 24
\end{pmatrix}$, $\begin{pmatrix}
4 & 0 & 0 \\
0 & 6 & 0 \\
0 & 0 & 8
\end{pmatrix}$ \\
\hline

\rnumber. & $\mathbb{Z}_2^4:S_5$  & $\begin{pmatrix}
2 & 0 & 0 \\
0 & 4 & 0 \\
0 & 0 & 40
\end{pmatrix}$, $\begin{pmatrix}
4 & 0 & 0 \\
0 & 8 & 0 \\
0 & 0 & 10
\end{pmatrix}$ \\ 
\hline

\rnumber. & $S_6$ & $\begin{pmatrix}
4 & 2 & 0 \\
2 & 4 & 0 \\
0 & 0 & 30
\end{pmatrix}$\\
\hline

\rnumber. & $M_{10}$ & $\begin{pmatrix}
2 & 0 & 0 \\
0 & 4 & 0 \\
0 & 0 & 30
\end{pmatrix}$, $\begin{pmatrix}
4 & 2 & 0 \\
2 & 6 & 0 \\
0 & 0 & 12
\end{pmatrix}$ \\ 
\hline

\rnumber. & $(\mathbb{Z}_3 \times A_5):\mathbb{Z}_2$ & $\begin{pmatrix}
4 & 1 & 0 \\
1 & 4 & 0 \\
0 & 0 & 30
\end{pmatrix}$, $\begin{pmatrix}
6 & 0 & 0 \\
0 & 10 & 5 \\
0 & 5 & 10
\end{pmatrix}$ \\ 
\hline

\rnumber. & $Q(\mathbb{Z}_3^2:\mathbb{Z}_2)$ & $\begin{pmatrix}
6 & 2 & 2 \\
2 & 6 & -2 \\
2 & -2 & 14
\end{pmatrix}$ \\ 
\hline

\rnumber. & $\mathbb{Z}_2^4:(S_3 \times S_3)$ & $\begin{pmatrix}
4 & 0 & 0 \\
0 & 6 & 0 \\
0 & 0 & 24
\end{pmatrix}$ \\ 
\hline

\rnumber. & $\mathbb{Z}_3^2:QD_{16}$ & $\begin{pmatrix}
4 & 2 & 0 \\
2 & 10 & 0 \\
0 & 0 & 12
\end{pmatrix}$ \\ 
\hline

\rnumber. & $3^{1+4} : 2.2^2$ & $\begin{pmatrix}
6 & 0 & 0 \\
0 & 6 & 0 \\
0 & 0 & 6
\end{pmatrix}$ \\ 
\hline

\rnumber. & $3^4 : A_6$ & $\begin{pmatrix}
6 & 3 & 0 \\
3 & 6 & 0 \\
0 & 0 & 6
\end{pmatrix}$ \\ 
\hline

\caption{15 maximal groups acting symplectically on fourfolds of type K3$^{[2]}$ and the invariant sublattices}
\label{table:2}
\end{longtable}
\end{center}
\normalsize

\bibliographystyle{halpha-abbrv}
\bibliography{references}
\end{document}